 \newtheorem{theorem}{Theorem}[section]
\newtheorem{corollary}[theorem]{Corollary}
\newtheorem{lemma}[theorem]{Lemma}
\newtheorem*{Hconjecture}{Huppert's Conjecture}
\theoremstyle{definition}
\newtheorem{remark}[theorem]{Remarks}
\newcommand{\PSL}{{\mathrm {PSL}}}
\newcommand{\GL}{{\mathrm {GL}}}
\newcommand{\la}{\langle}
\newcommand{\ra}{\rangle}
\newcommand{\Z}{\mathbb{Z}}
\newcommand{\F}{\mathbb{F}}
\newcommand{\N}{\mathbb{N}}
\newcommand{\Alt}{\mathrm{A}}
\newcommand{\Sym}{\mathrm{S}}
\newcommand{\Irr}{{\mathrm {Irr}}}
\newcommand{\cd}{{\mathrm {cd}}}
\newcommand{\St}{{\mathrm {St}}}
\newcommand{\Kernel}{{\mathrm {Ker}}}
\newcommand{\Centralizer}{{\mathbf {C}}}
\newcommand{\Center}{{\mathbf {Z}}}
\providecommand{\Aut}{\mathop{\rm Aut}\nolimits}%
\providecommand{\Out}{\mathop{\rm Out}\nolimits}%
\begin{document}

\title[Huppert's Conjecture]{Huppert's Conjecture for alternating groups}

\author[C.\ Bessenrodt]{Christine Bessenrodt}

\address{Institut f\"ur Algebra, Zahlentheorie und Diskrete Mathematik,
Leibniz Universit\"at Hannover, Welfengarten 1,
D-30167 Hannover, Germany}
\email{bessen@math.uni-hannover.de}

\author[H.P.\ Tong-Viet]{Hung P. Tong-Viet${}^\dag$}

\address{Department of Mathematics and Applied Mathematics,
University of Pretoria,
Private Bag X20, Hatfield
0028 Pretoria,
South Africa}
\email{Hung.Tong-Viet@up.ac.za}
\thanks{$^{\dag}$ This work is based on the research supported in part by the National Research Foundation of South Africa (Grant Number 93408)}

\author[J.\ Zhang]{Jiping Zhang${}^*$}

\thanks{$^{*}$Supported  by NSFC(11131001) and National
973 project(2011CB808003). }
\address{Beijing International Center for
Mathematical Research\\Lmam, The School of Mathematical
Sciences, \\ Peking University, Beijing, P. R. China
}
\email{jzhang@pku.edu.cn}
\keywords{character degrees,
alternating groups, Huppert's Conjecture} \subjclass[2000]{Primary
20C15, 20D05, 20C30}
\date{January 20, 2014}

\begin{abstract}  We prove that the alternating groups of degree at least $5$ are uniquely determined up to an abelian direct factor by the degrees of their irreducible complex representations.
This confirms Huppert's Conjecture for alternating groups.
\end{abstract}

\maketitle

\section{Introduction}\label{sec:intro}
Let $G$ be a finite group.
Denote by $\Irr(G)$ the set of all complex irreducible characters of~$G$.
Let $\cd(G)$ be the set of all
irreducible character degrees of $G$ forgetting multiplicities, that is,
$$\cd(G)=\{\chi(1)\mid \chi\in \Irr(G)\}.$$
In \cite{Hupp}, Huppert proposed the following
conjecture.

\begin{Hconjecture} Let $H$
be any finite nonabelian simple group and $G$ be a finite group
such that $\cd(G)=\cd(H)$.
Then $G\cong H\times A$, where $A$ is abelian.
\end{Hconjecture}

Notice that Huppert's Conjecture is best possible in the sense that if $G=H\times A$ with $A$ abelian, then $\cd(G)=\cd(H)$. In this paper, we prove the following result.

\begin{theorem}\label{th:main} Let $5\le n\in \N$. Let $G$ be a finite group such that $\cd(G)=\cd(\Alt_n)$. Then $G\cong \Alt_n\times A$, where $A$ is abelian.
\end{theorem}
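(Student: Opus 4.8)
The plan is to run Huppert's five-step program, with the combinatorics of the symmetric group supplying the arithmetic input. Write $H=\Alt_n$ and suppose $\cd(G)=\cd(H)$; we may assume $n$ is large, since the finitely many small cases (say $5\le n\le n_0$ for an explicit bound $n_0$), where exceptional isomorphisms such as $\Alt_5\cong\PSL(2,5)$, $\Alt_6\cong\PSL(2,9)$ and $\Alt_8\cong\PSL(4,2)$ come into play, can be disposed of by direct inspection of character tables or by citing earlier work on Huppert's conjecture. \emph{Step 1 ($G'=G''$):} If $G'\ne G''$, then $G/G''$ is a nonabelian metabelian quotient; passing to a solvable quotient $G/N$ minimal with respect to being nonabelian, one finds that $G/N$ has a unique minimal normal subgroup $V/N$, an elementary abelian $p$-group on which the abelian group $G/V$ acts faithfully. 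Ito's theorem together with the analysis of the irreducible characters of $G/N$ lying over the linear characters of $V/N$ forces $\cd(G)$ to contain nonlinear degrees of a very restricted arithmetic shape; this is incompatible with the arithmetic of $\cd(\Alt_n)$, which contains nontrivial coprime degrees and prime-power degrees for several primes. Hence $G/G'$ is abelian and $G'$ is perfect.

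\emph{Step 2 (the crux).} Choose $M\triangleleft G$ maximal such that $G'/M$ is nonabelian. Since $G'$ is perfect, $G'/M$ is a chief factor of $G$, hence $G'/M\cong S^k$ for some nonabelian simple group $S$ and some $k\ge 1$. Using that $(G/M)/(G'/M)\cong G/G'$ is abelian, Clifford theory and Gallagher's theorem show that, up to controlled multiples, every product $d_1\cdots d_k$ with $d_i\in\cd(S)$ must occur in $\cd(G)=\cd(\Alt_n)$; in particular the top degree $b(S)^k$ — and the products obtained from it by replacing one factor by a smaller degree of $S$ — are bounded by the largest degree $b(\Alt_n)$ and satisfy stringent divisibility relations. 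Feeding in the fine structure of $\Irr(\Alt_n)$ and $\Irr(\Sym_n)$ — the hook-length formula, the small degrees arising from partitions with at most two rows or columns, sharp estimates for $b(\Alt_n)$, results on prime-power degrees and on the number and spacing of the character degrees of $\Sym_n$ (in the spirit of Bessenrodt--Olsson and Balog--Bessenrodt--Olsson--Ono), and the classification of finite simple groups (to enumerate the possible degree sets of $S$) — one concludes that necessarily $k=1$ and $S\cong\Alt_n$. This elimination of all powers $k\ge 2$ and of every simple group other than $\Alt_n$ is where essentially all the work lies, and it is the main obstacle.

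\emph{Step 3 ($M=1$, so $G'\cong\Alt_n$).} If $M\ne 1$, pick a $G$-chief factor $L\le M$; the action of $G'/M\cong\Alt_n$ on $L$, combined with Gallagher's theorem and the (large) known lower bounds for the smallest nontrivial ordinary and projective irreducible degrees of $\Alt_n$, produces a character of $G$ whose degree is either too large for $\cd(\Alt_n)$ or divisible by a prime dividing no element of $\cd(\Alt_n)$ — a contradiction. Hence $M=1$ and $G'\cong\Alt_n$.

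\emph{Steps 4--5 (splitting off the abelian factor).} Put $C=\Centralizer_G(G')$. Then $\Alt_n\cong G'\cong G'C/C\trianglelefteq G/C\hookrightarrow\Aut(\Alt_n)$, so $G/C$ is almost simple with socle $\Alt_n$. Since $\Aut(\Alt_n)=\Sym_n$ for $n\ne 6$, and $\cd(\Sym_n)\not\subseteq\cd(\Alt_n)$ — witnessed, for instance, by $\chi^{\lambda}(1)$ for a suitable self-conjugate partition $\lambda$, whose associated character splits on restriction to $\Alt_n$ — we obtain $G/C\cong\Alt_n$; for $n=6$ one additionally checks that none of $\Sym_6$, $\PGL(2,9)$, $M_{10}$, $\mathrm{P\Gamma L}(2,9)$ has its degree set contained in $\cd(\Alt_6)$. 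Thus $G=G'C$, and since $G'\cap C=\Center(G')=1$ we get $G=G'\times C$. Finally $\cd(\Alt_n)=\cd(G)=\{ab:a\in\cd(G'),\,b\in\cd(C)\}$ together with $\cd(G')=\cd(\Alt_n)$ forces $\cd(C)=\{1\}$, so $C$ is abelian and $G\cong\Alt_n\times C$, completing the proof.
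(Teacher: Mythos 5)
Your outline follows the same broad strategy as the paper (Huppert's step program), but at the two places where the real mathematical content lies it either stops short or substitutes an invalid argument, so as it stands this is not a proof. The most serious problem is your Step 4: you claim $\cd(\Sym_n)\nsubseteq\cd(\Alt_n)$ is ``witnessed by $\chi^{\lambda}(1)$ for a suitable self-conjugate partition $\lambda$, whose associated character splits on restriction to $\Alt_n$.'' The splitting only tells you that this particular character contributes $\chi^{\lambda}(1)/2$ to $\cd(\Alt_n)$; it does not rule out that some \emph{other} (non-self-conjugate) partition $\mu$ satisfies $\chi^{\mu}(1)=\chi^{\lambda}(1)$, in which case $\chi^{\lambda}(1)\in\cd(\Alt_n)$ after all. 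Proving that for a suitable $\lambda$ no such coincidence of degrees occurs is a genuinely hard analytic/combinatorial statement: it was an open conjecture (for $\lambda=(k+1,1^k)$, resp.\ $(k,2,1^{k-2})$) and was only settled by Debaene, whose theorem the paper invokes precisely to eliminate the case $G/A\cong\Sym_n$. Without that ingredient your argument cannot exclude the possibility $G\cong(\Alt_n\times A)\cdot 2$, which is exactly the residual case the paper is left with before citing Debaene.

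The second gap is Step 2, which you yourself flag as ``where essentially all the work lies'': asserting that the CFSG, hook-length estimates and prime-power-degree results ``force'' $k=1$ and $S\cong\Alt_n$ is a statement of intent, not an argument. The paper's proof of this step is a long case analysis (sporadic groups via explicit extendible characters and the bounds $d_1,\dots,d_7$ of $\Alt_n$; alternating socles via the degrees $m-1$, $m(m-3)/2$, $(m-1)(m-2)/2$ and Bertrand's postulate; Lie type socles via the Steinberg character and the prime-power-degree classification), and none of it is reproduced or replaced in your sketch. Similarly, your Step 1 (metabelian quotient plus It\^{o}) and Step 3 (Gallagher plus lower bounds for ordinary and projective degrees) are gestures at arguments the paper carries out with substantial machinery: nonsolvability rests on Robinson's theorem on minimal nonlinear degrees of solvable groups together with delicate Frobenius-quotient analysis, and the elimination of a nontrivial minimal normal elementary abelian $M$ uses the Guralnick--Tiep non-coprime $k(GV)$ bound for $n\ge 17$ and an extensive Clifford-theoretic case analysis for $14\le n\le 16$, plus a separate argument (via prime-power degrees of $2\cdot\Alt_n$) to exclude the double-cover configuration. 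Until these steps are actually carried out, and the appeal to the self-conjugate splitting in Step 4 is replaced by Debaene's theorem (or an equivalent proof), the proposal remains an outline with a false justification at its final, decisive step.
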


This verifies Huppert's Conjecture for all alternating groups and is a major step toward the proof of the conjecture. This also extends the main result obtained by the second author in \cite{Tong1}. We now describe our approach to the proof of Huppert's Conjecture for alternating groups.
Suppose that $G$ is a finite group and $H$ is a finite nonabelian simple group such that $\cd(G)=\cd(H)$. To verify Huppert's Conjecture for the simple group $H$, we need to prove the following.

\smallskip
{\bf Step~1:} Show that $G$ is nonsolvable;

\smallskip
{\bf Step~2:}  If $L/M$ is any nonabelian chief factor of $G$,  then $L/M\cong H;$

\smallskip
{\bf Step~3:}  If $L$ is a finite perfect group and $M$ is a minimal normal elementary abelian subgroup of $L$ such that $L/M\cong H$,  then some degree of $L$ divides no degree of $H$.

\smallskip
{\bf Step~4:}  If $T$ is any finite group with $H\unlhd T\leq \Aut(H)$ and $T\neq H$,  then $\cd(T)\nsubseteq \cd(H)$.

\begin{remark} We want to make a few remarks.

(1) The method given here is basically Huppert's strategy as described in \cite{Hupp}. However, we add some improvements. We also combined Steps $3$ and $4$ of Huppert's method into one step (Step $3$). Notice that these steps are interchangeable.

(2) In the proof of Theorem \ref{th:main}, we can assume that $n\ge 14$. Huppert proved the conjecture in many cases, including
alternating groups of degree up to $n=11$;
for $n=12$ and $13$, it was proved by H.N.\ Nguyen,
H.P.\ Tong-Viet and T.P.\ Wakefield in \cite{NTW}.

(3) In verifying Step~1 it is essential that $H$ is simple.  Indeed, G.\ Navarro \cite{Navarro} recently constructed a finite perfect group $H$ and a finite solvable group $G$ such that $\cd(G)=\cd(H)$. More surprisingly, Navarro and Rizo \cite{NR} found a finite perfect group $H$ and a finite nilpotent group $G$ with $\cd(G)=\cd(H)$.
It remains open whether the character degrees together with their multiplicities can determine the solvability of a finite group.
This is related to Brauer's Problem 2 \cite{Brauer}, which asks
when nonisomorphic groups have isomorphic group algebras. 
Our proof of Step 1 uses
a result of G.R.\ Robinson \cite{Robinson} on the minimal degree of  nonlinear irreducible characters of finite solvable groups.

(4) To verify Step $2$, we will use the classification of finite simple groups in conjunction with the classification of prime power degree representations of alternating groups, symmetric groups and their covers \cite{BBOO,Bessenrodt} and the small degree representations of alternating groups \cite{Rasala}.

(5) In proving Step~3 for $H=\Alt_n$, we use a result due to Guralnick and Tiep \cite{Guralnick} on the non-coprime $k(GV)$ problem. Unfortunately, this only works for $n\ge 17$. For the remaining values of $n$, we have to resort
to Huppert's original method (see Theorem \ref{th:largedegree}). Up to this point, we have been able to show that either $G\cong \Alt_n\times A$ or $G\cong (\Alt_n\times A)\cdot 2$ and $G/A\cong\Sym_n$ with $A$ abelian (see Theorem \ref{thm:almostHC}).

(6) Finally, Theorem \ref{th:main} follows  if one can show that $\cd(\Sym_n)\nsubseteq \cd(\Alt_n)$ which is Step 4. (Recall that we assume $n\ge 14$.) Indeed, it is conjectured in \cite{Tong2} that if $\lambda=(k+1,1^k)$ when $n=2k+1$ and $\lambda=(k,2,1^{k-2})$ when $n=2k$, then
$\chi^{\lambda}(1)\in {\cd}(\Sym_n)\setminus{\cd}( \Alt_n)$,
and  $\chi^{\lambda}(1)/2\in {\cd}( \Alt_n)\setminus{\cd}(\Sym_n)$.
A lot of evidence for this conjecture had already been collected (in particular,
implying Theorem \ref{th:main} for some infinite series of values for $n$),
but the full result was only recently proved by K.\ Debaene \cite{Debaene}.
\end{remark}

The rest of the paper is organized as follows. In Section~\ref{sec:prelim}, we collect some useful results on character degrees of simple groups. In Section \ref{sec:Adegrees}, we present several technical results on character degrees of alternating groups which will be needed in subsequent 
sections. Section~\ref{sec:solvable} is devoted to verifying 
 Step~1. Steps 2 and 3 will be verified in Section \ref{sec:cf} and \ref{sec:Largedegrees}, respectively. Finally,
 in Section \ref{sec:HCproof} we prove Theorem \ref{thm:almostHC} and Theorem \ref{th:main}.

\section{Preliminaries}\label{sec:prelim}


For a finite group $G$,  we write $\pi(G)$ for the set of
all prime divisors of the order of~$G$. Denote by $p(G)$ the largest prime divisor of the order of~$G$. Let $\rho(G)$ be the set of all primes which divide some irreducible
character degree of~$G$. If $\cd(G)=\{d_0,d_1,\ldots,d_\ell\}$,  with $d_i<d_{i+1},0\leq i\leq \ell-1$,  then we define $d_i=d_i(G)$ for $1\leq i\leq \ell$. Then $d_i(G)$ is the $i^{th}$
smallest degree of the nontrivial character degrees of~$G$.
The largest character degree of~$G$ will be denoted by~$b(G)$, and
we let $k(G)$  denote the number of conjugacy classes of~$G$.
Furthermore, if $N\unlhd G$ and
$\theta\in {\Irr}(N)$,  then the inertia group
of $\theta$ in $G$ is denoted by $I_G(\theta)$.
The set of all irreducible constituents of
$\theta^G$ is denoted by ${\Irr}(G|\theta)$. A group $G$ is called an almost simple group with socle $S$ if $S\unlhd G\leq \Aut(S)$ for some nonabelian simple group $S$.

We need a couple of results from number theory. The first is called Bertrand's postulate; a proof can be found in \cite{Ramanujan}.

\begin{lemma}\emph{(Tschebyschef).}\label{lem2} If $m\geq 7$,  then there is at least one prime $p$ with $m/2<p\leq m$.
\end{lemma}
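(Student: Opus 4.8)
The plan is to deduce the lemma from the classical form of Bertrand's postulate: \emph{for every integer $n\ge 1$ there is a prime $p$ with $n<p\le 2n$}. Indeed, given $m\ge 7$ set $n=\lfloor m/2\rfloor\ge 3$; then $m\in\{2n,2n+1\}$, and a prime $p$ with $n<p\le 2n$ satisfies $p\ge n+1>m/2$ and $p\le 2n\le m$, so $p\in(m/2,m]$. It therefore suffices to prove Bertrand's postulate, which I would do by Erdős's elementary argument: assuming that $(n,2n]$ contains no prime, derive a contradiction from the size of the central binomial coefficient $N=\binom{2n}{n}$.

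Two auxiliary estimates carry the proof. On the lower side, $N$ is the largest of the $2n+1$ coefficients $\binom{2n}{k}$, whose sum is $4^n$, so $N\ge 4^n/(2n+1)$. On the upper side one needs the primorial bound $\prod_{p\le x}p<4^{x}$, which I would establish by induction on $\lfloor x\rfloor$: an even integer reduces to the previous case since no even number exceeding $2$ is prime, and for $x=2k+1$ every prime in $(k+1,2k+1]$ divides $\binom{2k+1}{k}\le\tfrac12\cdot 2^{2k+1}=4^{k}$ (that coefficient occurs twice in $(1+1)^{2k+1}$), so splitting the product at $k+1$ and using $\prod_{p\le k+1}p<4^{k+1}$ gives $\prod_{p\le 2k+1}p<4^{2k+1}$.

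The heart of the argument is the factorization $N=\prod_p p^{r_p}$, where Legendre's formula gives $r_p=\sum_{j\ge 1}\bigl(\lfloor 2n/p^{j}\rfloor-2\lfloor n/p^{j}\rfloor\bigr)$. Each summand is $0$ or $1$ and the sum is supported on $j$ with $p^{j}\le 2n$, so $p^{r_p}\le 2n$ for every $p$, and in particular $r_p\le 1$ once $p>\sqrt{2n}$. A one-line case check shows $r_p=0$ when $\tfrac{2n}{3}<p\le n$: then $\lfloor 2n/p\rfloor=2=2\lfloor n/p\rfloor$, and for $n\ge 5$ one has $p^{2}>2n$, killing the higher terms. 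Under the no-prime hypothesis there is also nothing in $(n,2n]$, so every prime factor of $N$ is at most $\tfrac{2n}{3}$; splitting these at $\sqrt{2n}$ and invoking the two auxiliary estimates,
\[
\frac{4^{n}}{2n+1}\ \le\ N\ \le\ (2n)^{\sqrt{2n}}\prod_{p\le 2n/3}p\ <\ (2n)^{\sqrt{2n}}\,4^{2n/3},
\]
whence $4^{n/3}<(2n+1)(2n)^{\sqrt{2n}}$. Taking logarithms, this inequality fails for all large $n$, and an explicit computation pins the threshold: it is violated once $n\ge 468$. The finitely many remaining cases $n<468$ are settled by the chain of primes
\[
2,\ 3,\ 5,\ 7,\ 13,\ 23,\ 43,\ 83,\ 163,\ 317,\ 631,
\]
in which each term is less than twice its predecessor, so for any $n<631$ the smallest chain-prime exceeding $n$ already lies in $(n,2n]$.

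The only genuine obstacle is bookkeeping: one must keep the crude bounds on $N$ and on the primorial clean enough that the final transcendental inequality $\tfrac{n}{3}\log 4<\log(2n+1)+\sqrt{2n}\,\log(2n)$ can be closed off at an explicitly checkable value of $n$, after which the ``descent along a chain of small primes'' trick covers everything below. No ingredient is deeper than elementary estimates for factorials, so there is no conceptual difficulty — only the need to track the constants honestly. (Alternatively, one could simply cite Ramanujan's short analytic proof via the Chebyshev function $\vartheta$, as the paper does.)
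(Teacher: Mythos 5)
Your argument is correct, but it goes a genuinely different way than the paper, which offers no proof at all: Lemma \ref{lem2} is stated as a classical fact and the reader is referred to Ramanujan's short analytic proof in \cite{Ramanujan}, which works with the Chebyshev function $\vartheta$ and Stirling-type estimates. You instead reprove Bertrand's postulate from scratch via Erd\H{o}s's elementary method, and the skeleton you give is the standard complete one: the reduction from the interval $(m/2,m]$ to $(n,2n]$ with $n=\lfloor m/2\rfloor$ is sound, the lower bound $\binom{2n}{n}\ge 4^n/(2n+1)$, the primorial bound $\prod_{p\le x}p<4^x$ by induction through $\binom{2k+1}{k}\le 4^k$, the Legendre/Kummer fact $p^{r_p}\le 2n$, the vanishing of $r_p$ for $2n/3<p\le n$ (valid once $n\ge 5$, which you note), and the final inequality $4^{n/3}<(2n+1)(2n)^{\sqrt{2n}}$ together with the Landau chain $2,3,5,7,13,23,43,83,163,317,631$ are all correctly assembled, and the chain covers every $n<631$, comfortably overlapping whatever exact threshold the explicit computation yields near $468$. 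What the two approaches buy is clear: the paper's citation keeps a peripheral classical ingredient to one line, which is appropriate since nothing in the rest of the argument depends on how Bertrand's postulate is proved; your route makes the lemma self-contained and entirely elementary, at the cost of honest constant-tracking and a finite numerical verification that a journal version of this paper would not want to carry. Your closing remark that one could simply cite Ramanujan is in fact exactly what the authors do.
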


The following is an elementary result.
\begin{lemma}\label{lem4} Let $n\geq 5$ be an integer and let $p$ be a prime. If
the $p$-part of $n!$ is $p^{\nu}$,  then $\nu\leq {n}/{(p-1)}$.
\end{lemma}

Combining the Ito-Michler Theorem with the fact that $\chi(1)$ divides $|G|$ for all $\chi\in \Irr(G)$,
we have the following known result.
\begin{corollary}\label{cor2} If $S$ is a nonabelian simple group then $\rho(S)=\pi(S)$.
\end{corollary}

Note that every simple group of Lie type $S$ in characteristic $p$
(excluding the Tits group) has an irreducible character of degree
$|S|_p$,  which is the size of the Sylow $p$-subgroup of~$S$,  and is
called the \emph{Steinberg} character of $S$, denoted by
$\St_S$. Moreover, this character extends to $\Aut(S)$,  the full
automorphism group of $S$.

\begin{lemma}\emph{(\cite[Lemma~2.4]{Tong2}.)}\label{lem3}
 Let $S$ be a simple group of Lie type  in
characteristic~$p$ defined over a finite field of size~$q$. Assume that $S\neq \PSL_2(q),{}^2\mathrm{F}_4(2)'$. Then
there exist two irreducible characters $\chi_i$  of $S$, $i=1,2$, such that both $\chi_i$ extend to
$\Aut(S)$ with $1<\chi_1(1)<\chi_2(1)$ and $\chi_2(1)=|S|_p$. In particular, if $G$ is an
almost simple group with socle $S$,  where $S\neq \PSL_2(q),{}^2\mathrm{F}_4(2)'$,  then $|S|_p> d_1(G)$.
\end{lemma}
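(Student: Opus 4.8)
The plan is to prove Lemma \ref{lem3} by a uniform argument based on the known classification of small-degree and large-degree irreducible characters of simple groups of Lie type, taking the two distinguished characters to be essentially the smallest nontrivial irreducible character and the Steinberg character. First I would recall that the Steinberg character $\St_S$ has degree $|S|_p$ and is well known to extend to $\Aut(S)$ (this is standard, e.g.\ via the fact that $\St_S$ is the unique irreducible character of its degree and is fixed by all automorphisms, together with the splitting of the relevant cohomology); this immediately supplies $\chi_2$ with $\chi_2(1)=|S|_p$. The real content is producing a second character $\chi_1$ with $1<\chi_1(1)<|S|_p$ that also extends to $\Aut(S)$, and then checking the inequality $\chi_1(1)<|S|_p$ holds outside the two excluded families.

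The key step is to exhibit $\chi_1$. For the classical groups and the exceptional groups one can take the (extension to $S$ of the) unipotent character corresponding to a suitable small partition/symbol — for instance the character of degree $(q^n-q)/(q-1)$-type for linear groups, or more robustly the smallest nontrivial unipotent character — since unipotent characters are invariant under field and (for the relevant cases) graph automorphisms and extend to the full automorphism group by Lusztig's theory. Alternatively, and perhaps more cleanly for a uniform write-up, I would invoke the explicit lower bounds for $d_1(S)$ (the minimal nontrivial character degree) due to Landazuri--Seitz--Zalesskii and its refinements, together with the tables of low-dimensional representations, to locate a specific character that is automorphism-invariant; the standard references already record which of these small characters extend. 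One then verifies case-by-case over the finitely many families that $1 < \chi_1(1) < |S|_p$: for the classical groups $|S|_p = q^{N}$ with $N$ the number of positive roots, which comfortably exceeds the polynomial-in-$q$ degree of the smallest character once $\PSL_2(q)$ is excluded; for the exceptional groups this is a short finite check.

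The final sentence of the lemma is then a direct consequence: if $S\unlhd G\le\Aut(S)$ with $S$ not $\PSL_2(q)$ or ${}^2\mathrm{F}_4(2)'$, both $\chi_1$ and $\St_S$ extend to $\Aut(S)$, hence restrict-and-extend to irreducible characters of $G$ of the same degrees $\chi_1(1)$ and $|S|_p$; since $1<\chi_1(1)<|S|_p$, the degree $|S|_p$ is not the smallest nontrivial degree of $G$, so $d_1(G)\le\chi_1(1)<|S|_p$.

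The main obstacle is the case analysis for $\chi_1$: one must ensure a single choice works uniformly, that the chosen character genuinely extends to \emph{all} of $\Aut(S)$ (diagonal, field, and graph automorphisms, and the graph-field automorphisms for ${}^2D_n$, ${}^3D_4$, ${}^2E_6$), and that none of the sporadic coincidences in low rank or small $q$ force $\chi_1(1)=|S|_p$ or $\chi_1(1)=1$ — which is precisely why $\PSL_2(q)$ (where the Steinberg degree $q$ can coincide with or sit just above the minimal degree) and the Tits group ${}^2\mathrm{F}_4(2)'$ (not a "generic" group of Lie type) are excluded. Since this is Lemma 2.4 of \cite{Tong2}, I would in practice cite that reference for the detailed verification rather than reproduce the full table-checking here.
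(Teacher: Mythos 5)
The paper offers no proof of this lemma at all---it is quoted directly from \cite[Lemma~2.4]{Tong2}---and your outline (taking $\chi_2=\St_S$ of degree $|S|_p$, producing a smaller $\Aut(S)$-invariant character $\chi_1$, and deducing $d_1(G)\le \chi_1(1)<|S|_p$ for $S\unlhd G\le \Aut(S)$) is exactly the argument underlying that cited result, which you also ultimately defer to. One caution: your blanket assertion that unipotent characters are invariant under field and graph automorphisms and extend to $\Aut(S)$ is not true in general (triality on $\mathrm{D}_4$, the degenerate-symbol pairs in $\mathrm{D}_n$, and the exceptional graph automorphisms of $\mathrm{B}_2$, $\mathrm{G}_2$, $\mathrm{F}_4$ all move unipotent characters), so the selection of $\chi_1$ really does require the case-by-case verification you acknowledge at the end, and that verification is precisely the content of \cite[Lemma~2.4]{Tong2}.
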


In Table \ref{Tab1}, for each sporadic simple group or the Tits group $S$,  we list the largest prime divisor of $|S|$ and the two irreducible characters of $S$ which are both extendible to $\Aut(S)$. In Table \ref{Tab2}, we list the two smallest nontrivial degrees of $\Aut(S)$ where $\Out(S)$ is nontrivial.

The following lemma will be useful in the last section.

\begin{lemma}\emph{(\cite[ Theorem~2.3]{Moreto}).}\label{lemmaMoreto} Let $N$
be a normal subgroup of a group $G$ and let $\theta\in\Irr(N)$ be
$G$-invariant. If $\chi(1)/\theta(1)$ is a power of a fixed prime
$p$ for every $\chi\in\Irr(G|\theta)$ then $G/N$ is solvable.
\end{lemma}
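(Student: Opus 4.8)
The plan is to reduce the statement, by way of the theory of character triples, to a question about the ordinary character degrees of a quotient, and then to invoke the It\^o--Michler theorem. By \cite[Chapter~11]{Isaacs} there is a character triple $(G_1,N_1,\theta_1)$ isomorphic to $(G,N,\theta)$ in which $N_1$ is a central cyclic subgroup of $G_1$ and $\theta_1$ is faithful and linear; the isomorphism of character triples provides a bijection $\Irr(G\mid\theta)\to\Irr(G_1\mid\theta_1)$, $\chi\mapsto\chi_1$, preserving degree ratios, so that $\chi(1)/\theta(1)=\chi_1(1)/\theta_1(1)=\chi_1(1)$, while $G/N\cong G_1/N_1$. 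Hence we may assume from the outset that $N$ is central cyclic in $G$, that $\theta$ is linear, and that every $\chi\in\Irr(G\mid\theta)$ has $p$-power degree; since $N$ is then abelian, it suffices to prove that $G$ itself is solvable.

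Work in a counterexample $(G,N,\theta)$ with $|G|$ minimal. First, the hypothesis is inherited by intermediate subgroups: if $N\le H\le G$ and $\psi\in\Irr(H\mid\theta)$, then $\psi(1)/\theta(1)$ is the degree of an irreducible projective representation of $H/N$, hence divides $[H:N]$, and also $\psi$ lies under some $\chi\in\Irr(G\mid\theta)$, so $\psi(1)/\theta(1)$ divides $\chi(1)/\theta(1)$, a power of $p$; in particular $\theta$ extends to $H$ whenever $[H:N]$ is prime to $p$. Second, if $\theta$ extends to $G$ --- for instance when $N=1$, where $\theta=1_G$ --- then Gallagher's theorem identifies $\Irr(G\mid\theta)$ with $\{\beta\hat\theta:\beta\in\Irr(G/N)\}$ and yields $\chi(1)/\theta(1)=\beta(1)$, so $\cd(G/N)$ consists of powers of $p$; by the It\^o--Michler theorem each Sylow $q$-subgroup of $G/N$ with $q\neq p$ is normal and abelian, these commute pairwise, so $G/N$ has an abelian normal $p$-complement and is solvable, a contradiction. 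Third, a minimality argument lets us assume $G/N$ is perfect: replacing $G$ by the preimage $G_0$ of the solvable residual of $G/N$ preserves all hypotheses by the first remark, and if $G_0$ were proper it would be solvable by minimality, forcing $G/N$ to be solvable.

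It remains to handle the case where $\bar G:=G/N$ is perfect nonsolvable and $\theta$ does not extend to $G$. This is the heart of the matter and must be treated via the twisted group algebra: $\Irr(G\mid\theta)$ is in degree-preserving correspondence with the irreducible projective representations of $\bar G$ belonging to a fixed nontrivial class in $H^2(\bar G,\mathbb{C}^\times)$, and one analyses these by Clifford theory along a chief series of $\bar G$, using a nonabelian chief factor $S^k$ together with the classification-free fact that no quasisimple group has all of its irreducible character degrees equal to powers of a single prime (again a consequence of It\^o--Michler, since such a group modulo its centre is simple and so has no normal abelian Sylow $q$-subgroup for any prime $q\neq p$ dividing its order, of which there is at least one). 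Pushing this information through an induction on $|G|$ --- reducing to the case where $\bar G$ is quasisimple-like and then exhibiting, from the known degrees of such groups, a projective character of $\bar G$ whose degree is divisible by two distinct primes --- contradicts the hypothesis that all degrees in $\Irr(G\mid\theta)$ are $p$-powers. I expect precisely this last step, the control of the genuinely projective situation and the precise chief-factor input, to be the main obstacle; the reduction to central cyclic $N$ and the extendible case are routine.
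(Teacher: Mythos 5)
The paper never proves this lemma; it is imported wholesale from Moret\'o \cite{Moreto}, so there is no internal argument of the paper to compare yours against. Judged on its own, your proposal is a correct reduction followed by an unproved core. The character-triple reduction to $N$ central cyclic with $\theta$ faithful linear, the inheritance of the hypothesis by intermediate subgroups, the extendible case via Gallagher plus It\^o--Michler, and the passage to a perfect quotient are all sound (though note that It\^o--Michler itself is classification-dependent, so nothing here is ``classification-free'').

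The genuine gap is exactly where you flag it: the case $G/N$ perfect nonsolvable with $\theta$ not extending is the actual content of the theorem, and you do not prove it --- you only describe a plan (Clifford theory along a chief series, an induction on $|G|$) and concede it is the main obstacle. Worse, the quasisimple input you propose to feed into that plan is the wrong statement. Knowing that not every ordinary irreducible character of a quasisimple group $L$ has $p$-power degree (which follows from It\^o--Michler applied to $L/\Center(L)$) says nothing about the degrees of the characters in $\Irr(L\mid\lambda)$ for a fixed faithful $\lambda\in\Irr(\Center(L))$, i.e.\ about the projective representations of the simple quotient with a fixed nontrivial cocycle --- and that is the set the twisted situation forces on you. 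Those degrees satisfy $\sum\chi(1)^2=|L:\Center(L)|$ and contain no degree $1$, so no elementary counting excludes their all being powers of a single prime; excluding it requires genuine, classification-dependent information about quasisimple groups, which is precisely where Moret\'o's proof does its work. As written, your text is an honest reduction plus a research programme, not a proof of the lemma.
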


\section{Character degrees of the alternating groups}\label{sec:Adegrees}

Let $n$ be a positive integer.
We call $\lambda=(\lambda_1,\lambda_2,\dots, \lambda_r)$ a  partition of $n$,
written $\lambda\vdash n$,  provided $\lambda_i, i=1,\dots, r$
are integers, with $\lambda_1\geq
\cdots\geq \lambda_r>0$ and $\sum_{i=1}^r \lambda_i=n$.
We collect the same parts together and write
$\lambda=(\ell_1^{a_1},\ell_2^{a_2},\dots,\ell_k^{a_k})$,
with $\ell_i>\ell_{i+1}>0$ for $i=1,\dots, k-1;a_i\neq
0;$ and $\sum_{i=1}^k a_i\ell_i=n$.
It is well known that the irreducible complex characters of the
symmetric group $\Sym_n$ are parameterized by partitions of~$n$.
Denote by $\chi^\lambda$  the
irreducible character of $\Sym_n$ corresponding to the partition~$\lambda$.
The irreducible characters of
the alternating group $\Alt_n$ are then obtained by
restricting $\chi^\lambda$ to $ \Alt_n$. In fact,
$\chi^\lambda$ is still irreducible upon restriction to the alternating group $ \Alt_n$ if and only if
$\lambda$ is not self-conjugate. Otherwise, $\chi^\lambda$ splits
into two different irreducible characters
of $\Alt_n$ having the same degree.

\medskip

Based on results by Rasala \cite{Rasala} we deduce the following list of
minimal degrees for the alternating groups.

\begin{lemma}\label{lem:mindeg}

\noindent$(a)$ If $n\geq 15$,  then

$(1)$ $d_1( \Alt_n)=n-1;$

$(2)$ $d_2( \Alt_n)=\frac{1}{2}n(n-3);$

$(3)$ $d_3( \Alt_n)=\frac{1}{2}(n-1)(n-2);$ 

$(4)$ $d_4( \Alt_n)=\frac{1}{6}n(n-1)(n-5);$

\smallskip
\noindent $(b)$ If $n\geq 22$,  then

$(5)$ $d_5( \Alt_n)=\frac{1}{6}(n-1)(n-2)(n-3);$

$(6)$ $d_6( \Alt_n)=\frac{1}{3}n(n-2)(n-4);$

$(7)$ $d_7( \Alt_n)=n(n-1)(n-2)(n-7)/24;$

$(8)$ $d_8( \Alt_n)=(n-1)(n-2)(n-3)(n-4)/24$.

\smallskip
\noindent $(c)$ If $n\geq 43$,  then

$(9)$ $d_9( \Alt_n)= n(n-1)(n-4)(n-5)/12$;

$(10)$ $d_{10}( \Alt_n)= n(n-1)(n-3)(n-6)/8$;

$(11)$ $d_{11}( \Alt_n)= n(n-2)(n-3)(n-5)/8$;

$(12)$ $d_{12}( \Alt_n)= n(n-1)(n-2)(n-3)(n-9)/120$.

\end{lemma}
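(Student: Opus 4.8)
The strategy is to transfer Rasala's description of the small-degree irreducible characters of the symmetric group $\Sym_n$ (from \cite{Rasala}) to $\Alt_n$ via the restriction map. First I would recall the relevant part of Rasala's work: for $n$ large, the irreducible characters $\chi^\lambda$ of $\Sym_n$ of smallest degrees correspond to partitions $\lambda$ that are ``close'' to the trivial or sign partition, i.e.\ either $\lambda$ or its conjugate $\lambda'$ has a large first part. Concretely one lists the partitions whose hook-length-formula degree is below some explicit polynomial bound in $n$; for instance $(n)$ gives $1$, $(n-1,1)$ gives $n-1$, $(n-2,2)$ gives $\tfrac12 n(n-3)$, $(n-2,1,1)$ gives $\tfrac12(n-1)(n-2)$, $(n-3,3)$ gives $\tfrac16 n(n-1)(n-5)$, and so on up through the twelve smallest values, with analogous conjugate partitions giving the same degrees. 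Rasala proved that for $n$ at least the stated bounds these are exactly the smallest degrees of $\Sym_n$ in the listed order, and in each of the listed cases $\lambda\ne\lambda'$.

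Next I would pass from $\Sym_n$ to $\Alt_n$. Since $\chi^\lambda|_{\Alt_n}$ is irreducible when $\lambda\ne\lambda'$ and since $\chi^\lambda$ and $\chi^{\lambda'}$ restrict to the \emph{same} irreducible character of $\Alt_n$, every degree in $\cd(\Alt_n)$ arises as $\chi^\lambda(1)$ for some non-self-conjugate $\lambda$ (or as $\tfrac12\chi^\mu(1)$ for a self-conjugate $\mu$, but those degrees are comparatively large and do not interfere with the bottom of the list). The point is that for $n$ above the stated thresholds, none of the twelve relevant degrees of $\Sym_n$ is ever halved in $\Alt_n$ (the partitions realizing them are not self-conjugate) and no ``new'' small degree appears from a self-conjugate partition: a self-conjugate partition of $n$ forces the first row to have length at least $\lceil\sqrt n\,\rceil$ complementary structure, and a short computation with the hook-length formula shows $\tfrac12\chi^\mu(1)$ exceeds, say, $d_{12}(\Alt_n)$ once $n$ is large enough. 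Hence $\cd(\Alt_n)$ and $\cd(\Sym_n)$ agree on their smallest values (in the range considered), and the list for $\Alt_n$ is read off directly from Rasala's list for $\Sym_n$.

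Finally I would verify that the stated ordering $d_1<d_2<\dots$ actually holds for the given ranges of $n$: this amounts to checking finitely many polynomial inequalities such as $\tfrac12 n(n-3)<\tfrac12(n-1)(n-2)$ for $n\ge 15$, $\tfrac16(n-1)(n-2)(n-3)<\tfrac13 n(n-2)(n-4)$ for $n\ge 22$, and the corresponding ones governing the jump to parts $(c)$ for $n\ge 43$. Each is elementary, and the thresholds $15$, $22$, $43$ are exactly where the orderings (and Rasala's completeness statements) first stabilize. The main obstacle is not conceptual but bookkeeping: one must make sure that Rasala's enumeration genuinely lists \emph{all} characters of $\Sym_n$ below the relevant degree bound (so that nothing is missed), and that in passing to $\Alt_n$ no self-conjugate partition sneaks a smaller degree into the bottom twelve — both are handled by the explicit polynomial estimates above together with the fact that the smallest self-conjugate partition degree grows faster than the listed ones.
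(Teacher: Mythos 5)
Your proposal follows essentially the same route as the paper: Rasala's list of the smallest degrees of $\Sym_n$, the observation that the partitions attaining them are non-self-conjugate (so those degrees restrict irreducibly to $\Alt_n$), the polynomial ordering checks at the thresholds $15$, $22$, $43$, and an estimate guaranteeing that self-conjugate partitions contribute no small half-degrees. The only (minor) difference is in that last step: where you estimate degrees of self-conjugate partitions directly via the hook-length formula, the paper instead uses Rasala's list up to $d_{13}(\Sym_n)$ and $d_{14}(\Sym_n)$ and notes that any irreducible constituent of the restriction of a character of degree exceeding $d_{11}(\Sym_n)$ (resp.\ $d_{14}(\Sym_n)$) has degree more than half of it, which already exceeds $d_8(\Sym_n)$ (resp.\ $d_{12}(\Sym_n)$); either variant closes the argument.
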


\begin{proof}
The first seven degrees were already deduced from
the list of minimal degrees for the symmetric groups in
\cite[Corollary~5]{Tong1}.
Similar arguments can be applied for the other degrees,
by using the list of minimal degrees for $\Sym_n$ up to
$d_{14}(\Sym_n)$; these are given in  Rasala \cite{Rasala}.
For $n\geq 22$, the next smallest degrees of
$\Sym_n$ are $d_j(\Sym_n)$, $j\in \{8,9,10,11\}$,
with polynomials as in $(8)-(11)$. For $n\geq 43$, the degree  $d_{12}(\Sym_n)$ is
given by the polynomial in (12),
and the next smallest degrees are

\begin{tabular}{ccl}
 $d_{13}( \Sym_n)$&= & $(n-1)(n-2)(n-3)(n-4)(n-5)/120$ \\
 $d_{14}( \Sym_n)$&= &  $n(n-1)(n-2)(n-4)(n-8)/30.$\\

\end{tabular}

All these character degrees for $\Sym_n$ are attained
(in the corresponding range) only at
non-symmetric partitions $\lambda=(\lambda_1,\lambda_2, \ldots)$, namely
at partitions where $n-\lambda_1 \leq 5$.
Thus they restrict to irreducible characters of $\Alt_n$.

For $n\geq 22$, we want to argue that $d_8(\Alt_n)$ is given as above.
Now a constituent $\chi \in \Irr(\Alt_n)$ in the restriction of a character
of degree $> d_{11}(\Sym_n)$  satisfies
$$\chi(1)>\frac 12 d_{11}(\Sym_n) > d_8(\Sym_n),$$ hence the formula in $(8)$ holds.

For $n\geq 43$, a constituent $\chi \in \Irr(\Alt_n)$ in the restriction of a character of degree $> d_{14}(\Sym_n)$  satisfies
$$\chi(1)>\frac 12 d_{14}(\Sym_n) > d_{12}(\Sym_n),$$ hence we have
the formulae in $(9)-(12)$. \end{proof}

The list above is crucial for excluding degrees that will come up in
later sections.

\begin{lemma}\label{lem:excl1}

Let $n\in \N$, $n\geq 14$, and let $s,t\in \N$.
\begin{enumerate}
\item[{(a)}]
If $n-1 = s(t-1)$ 
then $st \not\in \cd(\Alt_n)$.
\smallskip

\item[{(b)}] If $s>1$ and $n(n-3)/2 =s(t-1)$ 
then $st \not\in \cd(\Alt_n)$.
\end{enumerate}
\end{lemma}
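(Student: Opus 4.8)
The plan is to reduce both parts to Lemma~\ref{lem:mindeg} by showing that, under the stated hypotheses, the integer $st$ is squeezed strictly between two consecutive entries of the ordered list of character degrees of $\Alt_n$, and therefore cannot itself lie in $\cd(\Alt_n)$. I will first treat the generic case $n\geq 15$, where the formulas for $d_1,\dots,d_4$ in Lemma~\ref{lem:mindeg}(a) are available, and then dispose of $n=14$ by direct inspection of $\cd(\Alt_{14})$.

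For (a): the relation $n-1=s(t-1)$ forces $t\geq 2$, makes $s$ a divisor of $n-1$, and gives $st=(n-1)+s$. As $1\leq s\leq n-1$, this yields $n-1<st\leq 2(n-1)$; and a routine estimate shows $2(n-1)<\tfrac{1}{2}n(n-3)$ exactly when $n^{2}-7n+4>0$, which holds for $n\geq 7$. Hence $d_1(\Alt_n)=n-1<st<\tfrac{1}{2}n(n-3)=d_2(\Alt_n)$, so $st\notin\cd(\Alt_n)$.

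For (b): from $n(n-3)/2=s(t-1)$ with $s>1$ we get $t\geq 2$, $s\mid n(n-3)/2$, $s\geq 2$, and $st=\tfrac{1}{2}n(n-3)+s$. The crucial observation is that the gap between the second and third degrees of $\Alt_n$ is minimal: since $(n-1)(n-2)=n(n-3)+2$, we have $d_3(\Alt_n)=\tfrac{1}{2}(n-1)(n-2)=\tfrac{1}{2}n(n-3)+1=d_2(\Alt_n)+1$. Because $s\geq 2$ this gives $st\geq\tfrac{1}{2}n(n-3)+2>d_3(\Alt_n)$. On the other hand $s\leq\tfrac{1}{2}n(n-3)$ forces $st\leq n(n-3)$, and $n(n-3)<\tfrac{1}{6}n(n-1)(n-5)=d_4(\Alt_n)$ as soon as $n^{2}-12n+23>0$, i.e.\ for $n\geq 10$. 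Therefore $d_3(\Alt_n)<st<d_4(\Alt_n)$, and again $st\notin\cd(\Alt_n)$.

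The only delicate point is the tightness in (b): the near-coincidence $d_3(\Alt_n)=d_2(\Alt_n)+1$ is precisely why the hypothesis $s>1$ cannot be dropped, and one also has to check that even the largest admissible value $st=n(n-3)$ still stays below the much larger degree $d_4(\Alt_n)\approx n^{3}/6$. For the remaining case $n=14$, the same chains of inequalities are verified against the explicit small degrees $d_1(\Alt_{14})=13$, $d_2(\Alt_{14})=77$, $d_3(\Alt_{14})=78$, $d_4(\Alt_{14})=273$ (e.g.\ from Rasala's tables), which finishes the proof.
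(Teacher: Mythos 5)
Your proof is correct and follows essentially the same route as the paper: both parts are settled by squeezing $st$ strictly between consecutive entries of the list of smallest degrees of $\Alt_n$ from Lemma~\ref{lem:mindeg} (your bound $s\le n(n-3)/2$ in (b) just avoids the paper's split into $t=2$ and $t>2$), with $n=14$ handled by a direct check. The only quibble is your citation for the $n=14$ values $13,77,78,273$: Rasala's tables concern $\Sym_n$, so one must also note that no split (self-conjugate) character of $\Alt_{14}$ has degree below $273$ --- true, and obtained in the paper by a GAP computation rather than from Rasala.
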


\begin{proof}
(a) For $n=14$, we have $14, 26\not\in \cd(\Alt_n)$, so the claim holds.
When $n\geq 15$,  the inequality $$n-1< st = n-1+s \leq 2(n-1) < n(n-3)/2,$$
yields the assertion by Lemma~\ref{lem:mindeg}.

(b) For $n=14$, the assertion is checked directly,
so we may now assume $n\geq 15$.
If $t=2$, then $$(n-1)(n-2)/2< st=n(n-3) < n(n-1)(n-5)/6,$$  and if $t>2$ then $s\leq n(n-3)/4$ and  $$(n-1)(n-2)/2 < st = n(n-3)/2 + s \leq 3n(n-3)/4 < n(n-1)(n-5)/6,$$ hence part~(b) holds, again by Lemma~\ref{lem:mindeg}.
\end{proof}

\medskip

\begin{lemma}\label{lem:excl2}

Let $n\in \N$.
\begin{enumerate}
\item[{(a)}]
For any $1 < m \mid n-1$, $m(n-1) \not\in \cd(\Alt_n)$.
\smallskip

\item[{(b)}]
For $1 < s\mid n-1$, $s(n-1)(n-2)/2 \not\in \cd(\Alt_n)$ except when $s=2$, $n=9$ or $s=12$, $n=13$,
and   $s(n-1)(n-2)(n-3)/6 \not\in \cd(\Alt_n)$, except when
$s=3$ and $n\in\{4,10,16\}$.
\smallskip

\item[{(c)}] 
For $i\leq 3$, $n(n-1)(n-3)/2^i \in \cd(\Alt_n)$,
only when $i=1$ and $n\in \{9,10,14\}$, or $i=2$ and $n\in \{4,8,12\}$,
or $i=3$ and $n\in \{5,7,8,11\}$.
\smallskip

\item[{(d)}] 
For $i\leq 3$, $n(n-1)(n-2)(n-4)/3^i \in \cd(\Alt_n)$ only when
$i=1$ with $n\in\{11,12,13,18,23\}$, or $i=3$ with $n\in \{10,13,27,31\}$.

\item[{(e)}] 
For $n=14$, $7280=n(n-1)(n-2)(n-4)/3  \notin \cd(\Alt_n)$.
For $n=16$ and $s=3$ or~$5$,
$s\binom{15}{5}\notin \cd(\Alt_{n})$.
\end{enumerate}
\end{lemma}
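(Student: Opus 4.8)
The overall strategy in each part is the same: we want to show that certain explicit polynomial expressions in $n$ (arising from divisors $m,s$ of small quantities like $n-1$ and from denominators $2^i$ or $3^i$) cannot lie in $\cd(\Alt_n)$, with a finite and completely listed set of exceptions. The tool is Lemma~\ref{lem:mindeg}: for $n$ large enough, the smallest degrees of $\Alt_n$ are given by explicit polynomials $d_1,\dots,d_{12}$, and any character degree either equals one of these or exceeds $d_{12}(\Alt_n)\sim n^5/120$. So the plan is, for each part, to (i) bound the candidate expression $N(n)$ from above and below by consecutive $d_j(\Alt_n)$ for $n$ above some threshold, forcing $N(n)$ — if it is a degree at all — to coincide with one of the interpolating polynomials $d_j(\Alt_n)$; (ii) observe that two distinct polynomials in $n$ agree only for finitely many $n$, and solve the resulting polynomial equations $N(n)=d_j(\Alt_n)$ to get a finite candidate list; (iii) for each surviving $n$, together with all $n$ below the threshold, check membership in $\cd(\Alt_n)$ directly from known character tables / the hook length formula. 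This yields exactly the stated exception lists.

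For part~(a) the bound is immediate: if $m>1$ divides $n-1$ then $n-1 < m(n-1)\le (n-1)^2 < \binom{n-1}{2}=d_3(\Alt_n)$ once $n$ is moderately large, while for $n$ small there is nothing between $d_1=n-1$ and the next few degrees that $m(n-1)$ could be, so it is never a degree (no exceptions). For part~(b), the two expressions are $s\cdot d_3(\Alt_n)$ and $s\cdot d_5(\Alt_n)$ with $s\mid n-1$, $s>1$; here one squeezes $s\,\binom{n-1}{2}$ between $d_4$ and, say, $d_8$ or $d_{12}$ of $\Alt_n$ (using $s\le n-1$), reducing to finitely many polynomial identities, and similarly $s\binom{n-1}{3}$ is squeezed among the higher $d_j$; solving these and checking small $n$ gives the listed exceptions $s=2,n=9$; $s=12,n=13$; $s=3,n\in\{4,10,16\}$. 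Parts~(c) and~(d) are analogous: $n(n-1)(n-3)/2^i$ for $i\le3$ is a cubic lying near $d_4(\Alt_n)=\frac16 n(n-1)(n-5)$ and $d_5(\Alt_n)$, and one compares the three cubics ($i=1,2,3$) against $d_4,\dots,d_8$; likewise $n(n-1)(n-2)(n-4)/3^i$ is a quartic to be compared with $d_7(\Alt_n)=\binom{n}{4}\cdot\frac{n-7}{n-3}$-type quartics and the neighbouring $d_j$. In each case the comparison is valid for $n$ past an explicit bound (roughly the bounds $15,22,43$ from Lemma~\ref{lem:mindeg}), and finitely many polynomial equations plus a finite brute-force range remain. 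Part~(e) is purely the residual small-$n$ check for $n=14$ and $n=16$ that is not covered by the generic argument; it is verified directly, e.g. by listing $\cd(\Alt_{14})$ and $\cd(\Alt_{16})$ or by noting that $7280$, $3\binom{15}{5}=9009$ and $5\binom{15}{5}=15015$ do not arise as hook-length-formula values of partitions of $14$, resp.\ $16$.

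The main obstacle is not any single deep idea but the organization of the case analysis so that the thresholds line up cleanly with the ranges in Lemma~\ref{lem:mindeg}: one must be careful that the upper range ($n\ge 43$, with $d_9,\dots,d_{12}$ known) is used whenever the candidate polynomial has degree $4$, the middle range ($n\ge 22$) for degree-$3$ and some degree-$4$ candidates, and that the slack between consecutive $d_j$ is genuinely larger than the correction term (e.g.\ $s\le n-1$ contributes only a bounded multiplicative factor, which must still fit below the next listed degree). The other slightly delicate point is solving the polynomial identities $N(n)=d_j(\Alt_n)$: these are equations of degree $\le5$ in $n$ with integer (often multiple) roots, and one must enumerate all integer roots — not just the obvious ones — to be sure the exception list is complete. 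Once the squeezing inequalities and the root enumerations are done, the finitely many remaining values of $n$ (all below $43$) are handled by the explicit character-degree data for $\Alt_n$, completing the proof.
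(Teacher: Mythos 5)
Your plan is essentially the paper's own proof: both squeeze each candidate value between consecutive minimal degrees $d_j(\Alt_n)$ from Lemma~\ref{lem:mindeg} (with the thresholds $15$, $22$, $43$), resolve the few cases where the candidate could coincide with some $d_j$ by solving the resulting divisibility/polynomial conditions (as the paper does for $st=d_4,d_5$ in part (b)), and dispose of all smaller $n$ by direct computation, which is exactly how the exception lists and part (e) arise. One slip to fix: in (a) you assert $m(n-1)\le (n-1)^2<\binom{n-1}{2}=d_3(\Alt_n)$, which is false since $(n-1)^2>\binom{n-1}{2}$ for every $n$; the correct squeeze (the one the paper uses) is $d_3(\Alt_n)<(n-1)^2<d_4(\Alt_n)$ for $n\ge 15$, after which one still has to rule out $m(n-1)=d_2(\Alt_n)$ or $d_3(\Alt_n)$ for proper divisors $m$ of $n-1$ — easy divisibility checks of precisely the kind your general scheme already prescribes — so the error is local and does not affect the viability of your approach.
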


\begin{proof}
Set $d_j=d_j(\Alt_n)$ for $1\le j\in\N$.

(a) For $n<15$, $(n-1)^2$ is not a degree (by inspection),
and for $n\geq 15$,
$d_3<(n-1)^2<d_4$ shows the assertion by Lemma~\ref{lem:mindeg}.


\smallskip

(b) Let $1 < s\mid n-1$. Set $t=(n-1)(n-2)/2$.

Assume first that $s=n-1$.
For $n<22$, $st$ is in $\cd(\Alt_n)$ only for $n=13$.
For $n\ge 22$, we have $d_6< st < d_7$, and thus  $st$ is
not in $\cd(\Alt_n)$.

Assume now that $1<s<n-1$, so $2\le s \le (n-1)/2$.
For $n<22$, $st$ is in $\cd(\Alt_n)$ only for $n=9$, $s=2$.
For $n\ge 22$, we have $d_3 < st < d_6$, so we only have to check
that $st\ne d_4, d_5$.
Now $st=d_4$ implies $3s(n-2)=n(n-5)$ and hence $n\mid 3(n-2)$, giving $n\mid 6$,
a contradiction.
If $st=d_5$, then $3s=n-3$, and thus $s\mid (n-1,n-3)\leq 2$ and $n\le 9$, again a contradiction.

For the second assertion, we now set $u=(n-1)(n-2)(n-3)/6$.
For $n<43$, we only find the stated exceptional cases (by computation),
so we now assume $n\ge 43$.

Again, we start with the case $s=n-1$.
We deduce from
$d_{11} < su < d_{12}$
that $su \not\in\cd(\Alt_n)$.

Now consider $1<s<n-1$, i.e.,  $2\le s \le (n-1)/2$.
Then we have $d_6 < su < d_{10}$, and we only have to show
that $su \not\in\{d_7,d_8,d_9\}$.
An easy consideration of cases similar to the previous case shows that
no further exception arises.
\smallskip

(c)
Let $t_i=n(n-1)(n-3)/2^i$.

For $n\leq 21$ we find the exceptions with a computation (by Maple, say).
So we may assume $n\ge 22$.
One easily checks: $d_6 < t_1 < d_7$, $d_5 < t_2 < d_6$,
and $d_3<t_3<d_4$.
Hence $t_1, t_2, t_3 \not\in \cd(\Alt_n)$ for $n\ge 22$.
\smallskip

(d)
Let $t_i=n(n-1)(n-2)(n-4)/3^i$.

For $n\leq 42$ we find the stated exceptions by computation.
So we may assume $n\ge 43$.
One easily checks: $d_{11} < t_1 < d_{12}$, $d_9 < t_2 < d_{10}$,
and $d_6<t_3<d_7$.
Hence $t_1, t_2, t_3 \not\in \cd(\Alt_n)$ for $n\ge 43$.
\smallskip

(e) is easily checked directly.
\end{proof}

\section{Solvable groups}\label{sec:solvable}

First we collect some preliminary facts.

\begin{lemma}\label{lem:arithmetic}
Let $n\in \N$.    
Then the following holds.
\begin{enumerate}
\item[{(a)}] $(n-1, n(n-3)/2) =d>1$ if and only if $n=4k+3$ and $d=2$.
\smallskip

\item[{(b)}] Let $s, n, a, b\in \N$ and $p$ a prime with $s(n-1)=p^a-1$ and $p^b\mid n$ where $b=a$ or~$a/2$.
Then $n=p^a$ or $p^b$.
\end{enumerate}
\end{lemma}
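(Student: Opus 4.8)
\emph{Proposal.} Both parts are elementary, and I would reduce each to a short divisibility argument.

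For part (a), the plan is to first bound the gcd and then pin it down modulo $4$. Since $n\equiv 1\pmod{n-1}$, we have $n(n-3)\equiv 1\cdot(1-3)=-2\pmod{n-1}$, so $n-1\mid n(n-3)+2$. Hence if $d=(n-1,\,n(n-3)/2)$, then $d$ divides $n-1$ and divides $n(n-3)$, so $d\mid 2$; thus $d\in\{1,2\}$, which already forces $d=2$ whenever $d>1$. It then remains to decide when $2\mid n-1$ and $2\mid n(n-3)/2$ both hold. The first says $n$ is odd; assuming this and writing $n-3=2m$ with $m=(n-3)/2$, we get $n(n-3)/2=nm$, which (as $n$ is odd) is even precisely when $m$ is even, i.e. $4\mid n-3$, i.e. $n=4k+3$. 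If instead $n$ is even, then $n-1$ is odd and $d=1$. This settles (a), with no real difficulty.

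For part (b), I would first record that $s(n-1)=p^a-1$ forces $n\ge 2$ and $n-1\mid p^a-1$, and that $p^b\mid n$ gives $n-1\equiv -1\pmod{p^b}$, so $\gcd(n-1,p)=1$; also $a\ge 1$ and $b\in\{a,a/2\}$ give $b\ge 1$, hence $p^b\ge 2$. If $b=a$, then $p^a=p^b\mid n$, so $n\ge p^a$, while $n-1\mid p^a-1$ with both sides positive gives $n-1\le p^a-1$; hence $n=p^a=p^b$. If $b=a/2$ (so $a=2b$), write $n=p^bc$ with $c\in\N$; if $c=1$ we are done since $n=p^b$, so assume $c\ge 2$ and put $k=(p^{2b}-1)/(n-1)\in\N$. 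Reducing the identity $(p^bc-1)k=p^{2b}-1$ modulo $p^b$ gives $(-1)k\equiv -1\pmod{p^b}$, i.e. $k\equiv 1\pmod{p^b}$; on the other hand $c\ge 2$ gives $n-1=p^bc-1\ge 2p^b-1$, so $k\le (p^{2b}-1)/(2p^b-1)<p^b$. Combined with $k\ge 1$ this forces $k=1$, hence $n-1=p^{2b}-1$ and $n=p^{2b}=p^a$, as required.

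The only mildly delicate point is the case $b=a/2$ of part (b): one must observe that the cofactor $k=(p^a-1)/(n-1)$ satisfies $k\equiv 1\pmod{p^b}$ and also, once $c\ge 2$, the size estimate $k<p^b$, so that $k=1$. Everything else is routine arithmetic.
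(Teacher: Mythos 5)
Your proof is correct and follows essentially the same elementary route as the paper: part (a) is the same routine gcd computation, and in part (b) both arguments hinge on the congruence $s\equiv 1\pmod{p^b}$ (the paper writes this as $s-1=kp^b$) combined with a size estimate to force the extreme cases $n=p^a$ or $n=p^b$.
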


\begin{proof}
(a) is easy arithmetic.
\smallskip

(b)
Let $t$  be such that $tp^b=n$, then $s(tp^b-1)=p^a-1$. So $st p^b=p^a+s-1$.
There exists a non-negative integer $k$ satisfying $s-1=kp^b$.
Now $t(kp^b+1)p^b=p^a+kp^b$, $ts=p^{a-b}+k$.
If $a=b$ then $t kp^a =t(s-1)= 1+k-t\leq k$ which implies that $k=0$
and $s=1$ with $n=p^a$. If $a=2b$  then $(kt-1)p^b=k-t$, so either $t=p^b$
and $n=p^a$, or $t=1$ and $n=p^b$.
\end{proof}

\begin{lemma}\label{lem:group}
Let $n\in \N$, $n\ge 14$,
and $G$ a solvable group with $\cd(G)=\cd(\Alt_n)$.
Then the following holds.
\begin{enumerate}
\item[{(a)}]
$G$ has no normal subgroup $N$ with $G/N$  nonabelian of prime power order.
\smallskip

\item[{(b)}]
For  $\phi \in \Irr(G)$ such that $\phi(1)=n-1$ or $n(n-3)/2$,
if $\phi(1)$ is the minimal nonlinear degree in $\cd(G/\Kernel(\phi))$
and $\Kernel(\phi )$  is maximal possible under subgroup inclusion within~$G$,
then $G/\Kernel(\phi )$ is a Frobenius group with a minimal normal subgroup
as the Frobenius kernel.
\end{enumerate}

\end{lemma}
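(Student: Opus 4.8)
The plan is to handle the two parts separately, both relying on the structure of $\cd(\Alt_n)$ established in Lemmas~\ref{lem:mindeg}--\ref{lem:excl2} together with basic Clifford theory and the classical results of Isaacs on solvable groups.

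For part~(a), suppose for contradiction that $N\unlhd G$ with $G/N$ nonabelian of prime power order $p^a$. Since $G/N$ is a nonabelian $p$-group, it has an irreducible character of some degree $p^c$ with $c\ge 1$, and moreover every nonlinear irreducible degree of $G/N$ is a power of $p$ exceeding $1$; in particular $p \in \rho(G) = \rho(\Alt_n) = \pi(\Alt_n)$. I would first invoke the standard fact (It\^o's theorem in the solvable setting, or simply that a nonabelian $p$-group has derived subgroup of order $\ge p$ but the key point is) that $G/N$ has a character of degree a power of $p$, and this degree lies in $\cd(\Alt_n)$. The real content is that $\cd(\Alt_n)$ contains \emph{no} nontrivial prime power at all for $n\ge 14$: indeed, by the classification of prime power degree representations of symmetric and alternating groups (\cite{BBOO,Bessenrodt}), for $n\ge 14$ the only prime power degrees of $\Sym_n$ are $1$ and (for $n$ a prime) $n-1$, and for $\Alt_n$ one checks similarly that no degree $d_i(\Alt_n)$ with $i\ge 1$ is a prime power when $n\ge 14$ — e.g. $d_1 = n-1$ being a $p$-power would force $n = p^a+1$, but then $d_2 = n(n-3)/2$ is divisible by the odd part issues, and in any case a direct appeal to the cited classification settles it. This immediately contradicts the existence of the $p$-power degree from $G/N$, proving~(a).

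For part~(b), the hypotheses are precisely those of the classical theorem of Isaacs--Passman / Isaacs (see \cite{Isaacs}, the result that a group all of whose nonlinear character degrees relevant to a section are constrained forces a Frobenius structure): one replaces $G$ by $\bar G = G/\Kernel(\phi)$, so $\phi$ is a faithful irreducible character of $\bar G$ and $\phi(1) \in \{n-1, n(n-3)/2\}$ is the smallest nonlinear degree of $\bar G$. The maximality of $\Kernel(\phi)$ among kernels of characters achieving this minimal degree means: for every nontrivial normal subgroup $\bar M$ of $\bar G$, every irreducible character of $\bar G/\bar M$ has degree $< \phi(1)$ — i.e.\ is either linear or has degree strictly below the smallest nonlinear degree of $\bar G$, hence is linear. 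Thus $\bar G/\bar M$ is abelian for every minimal normal subgroup $\bar M$, so $\bar G' \le \bar M$ for all such $\bar M$; since $\bar G$ is not abelian (it has the nonlinear character $\phi$), this forces $\bar G$ to have a \emph{unique} minimal normal subgroup $V = \bar G'$, which is elementary abelian (as $\bar G$ is solvable). Now $\phi$ restricted to $V$ is a sum of $G$-conjugates of a linear character $\theta$ of $V$; if $\theta$ were $\bar G$-invariant it would be in the centre situation and $\phi(1)$ would divide $|\bar G/V|$ in a way forcing $\phi(1)^2 \le |\bar G : \Center(\bar G)|$ with $V \le \Center(\bar G)$, contradicting that $V = \bar G'$ is the unique minimal normal subgroup and nontrivial; so $\theta$ is not invariant, $I_{\bar G}(\theta) < \bar G$, and standard arguments (the orbit of $\theta$ has size $\phi(1) = |\bar G : I_{\bar G}(\theta)|$, and $V$ acts on $\Irr(V)$ with regular orbits) show $\Centralizer_{\bar G}(v) = V$ for $1\ne v\in V$, i.e.\ $\bar G$ is a Frobenius group with kernel $V$.

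I expect the main obstacle to be part~(b): making the passage from the maximality hypothesis on $\Kernel(\phi)$ to the conclusion ``$\bar G$ has a unique minimal normal subgroup equal to $\bar G'$, and the stabilizer of a nonprincipal character of it is proper'' fully rigorous requires care, because one must rule out the degenerate case where $\phi$ restricts to a multiple of a single $\bar G$-invariant linear character of $V$ (the ``$\phi$ vanishes off the centre'' case). Here one uses that if $V \le \Center(\bar G)$ then $V$ being the unique minimal normal subgroup together with $V = \bar G'$ and $\bar G$ solvable gives $\bar G/V$ abelian, whence $\phi(1)^2 = |\bar G : \Center(\bar G)| \le |\bar G : V|$ is a prime power — but $\phi(1) \in \{n-1, n(n-3)/2\}$ is not a prime power for $n \ge 14$ by part~(a)'s analysis (or the prime-power-degree classification), a contradiction. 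Part~(a) is comparatively routine once the classification of prime power degree representations is invoked, so the write-up should spend most of its effort on the Frobenius structure in~(b).
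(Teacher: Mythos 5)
Your proof of part (a) rests on the claim that $\cd(\Alt_n)$ contains no nontrivial prime power for $n\ge 14$, and that claim is false: by the very classification you cite (\cite[Theorem~5.1]{BBOO}), $d_1(\Alt_n)=n-1$ \emph{is} a prime power degree whenever $n-1$ is a prime power, which happens for infinitely many $n\ge 14$ (e.g.\ $n=14,17,18,\dots$). So the mere existence of a $p$-power degree in $G/N$ is no contradiction; all it gives is $\phi(1)=n-1=p^b$ for every nonlinear $\phi\in\Irr(G/N)$. The paper's proof has to do genuine extra work at this point: since $n(n-3)/2$ is then coprime to $p$, a character $\chi$ of that degree restricts irreducibly to $N$, Gallagher's theorem produces the degree $(n-1)\cdot n(n-3)/2=n(n-1)(n-3)/2\in\cd(\Alt_n)$, which Lemma~\ref{lem:excl2}(c) excludes except for $n=14$, and $n=14$ is then eliminated using the degree $560$ (since $13\cdot 560=7280\notin\cd(\Alt_{14})$ by Lemma~\ref{lem:excl2}(e)). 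None of this is in your argument, and your parenthetical attempt to argue that $n-1$ cannot be a prime power cannot be repaired.

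Part (b) has two gaps. First, the maximality of $\Kernel(\phi)$ only forbids an irreducible character of $\bar G/\bar M$ of degree \emph{exactly} $\phi(1)$ for nontrivial $\bar M\unlhd\bar G$; it says nothing about characters of larger degree, so your deduction that $\bar G/\bar M$ is abelian for every minimal normal $\bar M$ (hence a unique minimal normal subgroup $V=\bar G'$) is unjustified. Second, and more seriously, the concluding ``standard arguments'' that $\Centralizer_{\bar G}(v)=V$ for $1\ne v\in V$ is precisely the hard point and is false for general solvable groups: faithful characters of minimal nonlinear degree with maximal kernel can live on non-Frobenius quotients (affine semilinear-type configurations), which is exactly why Robinson's theorem, used by the paper, has additional alternatives besides the Frobenius one, namely a nonabelian prime-power quotient (killed here by part (a)) or $\phi(1)=s(t-1)$ with $t-1$ a prime power and $st\in\cd(G/\Kernel(\phi))$ (killed here by Lemma~\ref{lem:excl1}). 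Your argument for (b) never uses $\cd(G)=\cd(\Alt_n)$ except through the incorrect prime-power claim (your central-case contradiction again assumes $n-1$ is not a prime power), so it cannot be completed as sketched; one must either quote \cite[Theorem~1]{Robinson} together with Lemma~\ref{lem:excl1}, as the paper does, or reprove the relevant case analysis of that theorem and then invoke the exclusion lemma to eliminate the non-Frobenius configuration.
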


\begin{proof}
(a)  Suppose that $G/N$  is a nonabelian $p$-group and let $\phi $
be a nonlinear  irreducible character of $G$ with $N\leq \Kernel(\phi )$,
then $n-1=p^b =\phi (1)$ by \cite[Theorem 5.1]{BBOO}.  For $\chi \in \Irr(G)$ with $\chi(1)=n(n-3)/2$
the restriction of  $\chi $ to $N$ is irreducible, thus for any
$\beta \in \Irr(G/N), \chi \beta \in \Irr(G)$.
In particular $(n-1)n(n-3)/2 \in \cd(\Alt_n)$, by Lemma~\ref{lem:excl2}
we have $n=14$, but then there exists  $\chi' \in \Irr(G)$ with
$\chi'(1)=560$, however $(n-1)\chi'(1)\notin \cd(\Alt_n)$, a contradiction.
\smallskip

(b) By  \cite[Theorem 1]{Robinson} and (a), if  $G/\Kernel(\phi)$ is not a Frobenius group
then $\phi(1)=s(t-1)$ with $t-1$
a prime power and $st\in \cd(G/\Kernel(\phi))\subseteq \cd(\Alt_n)$,
this contradicts Lemma~\ref{lem:excl1}.
\end{proof}

\bigskip

\begin{theorem}\label{thm:nonsolvable}
Let $n\in \N$, $n\geq 14$.
If $G$ is a finite group with $\cd(G)=\cd(\Alt_n)$
then $G$ is nonsolvable.
\end{theorem}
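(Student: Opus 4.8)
The plan is to argue by contradiction: assume $G$ is a solvable group with $\cd(G)=\cd(\Alt_n)$ and derive a contradiction using the structure results from Section~\ref{sec:solvable}, together with the key input of Robinson's theorem on minimal nonlinear degrees of solvable groups. The overall strategy is to pin down the Frobenius section of $G$ carrying the degree $d_1(\Alt_n)=n-1$, and then show that the arithmetic constraints on character degrees of a Frobenius group (the Frobenius kernel $V$ is elementary abelian of order a prime power $q$, the complement acts with orbits of size dividing $q-1$ on the nontrivial characters, etc.) force $n-1$ and $n(n-3)/2$ into a shape incompatible with Lemma~\ref{lem:excl1}, Lemma~\ref{lem:excl2}, and Lemma~\ref{lem:arithmetic}.

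Concretely, I would proceed as follows. First, take $\phi\in\Irr(G)$ with $\phi(1)=n-1$, chosen so that $\Kernel(\phi)$ is maximal among kernels of characters of this degree and $n-1$ is the minimal nonlinear degree of $\bar G:=G/\Kernel(\phi)$; by Lemma~\ref{lem:group}(a),(b), $\bar G$ is a Frobenius group with elementary abelian minimal normal Frobenius kernel $V$ of order $q=p^a$, and a complement $C$. Robinson's theorem (quoted as \cite[Theorem~1]{Robinson}) describes exactly how the minimal nonlinear degree of such a group arises, and combined with the non-$p$-group conclusion of Lemma~\ref{lem:group}(a), this yields $n-1 = s(t-1)$ or similar multiplicative relations, or forces $q - 1 \mid n-1$ with $|C|$ related to $n-1$. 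Next, I would do the same with the degree $n(n-3)/2 = d_2(\Alt_n)$, getting a second Frobenius section, and use Lemma~\ref{lem:arithmetic}(a) on the gcd $(n-1, n(n-3)/2)$ to control how the two sections interact. The divisibility relation in Lemma~\ref{lem:arithmetic}(b) is tailored to convert $s(n-1) = p^a - 1$ together with $p^b \mid n$ into $n = p^a$ or $n = p^b$, i.e.\ to show $V$ has order essentially $n$; once $V$ is forced to have order close to $n$, one gets degrees like $m(n-1)$ with $m\mid n-1$ (from the complement action on $\Irr(V)$) or $s(n-1)(n-2)/2$ landing in $\cd(\Alt_n)$, which Lemma~\ref{lem:excl2}(a),(b) forbids for $n\ge 14$ (the small exceptions $n=9,13$ being below our range).

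The main obstacle I expect is bookkeeping the Frobenius complement's possible degrees and orbit sizes carefully enough to squeeze out the needed multiplicative relation — Robinson's bound gives the \emph{minimal} nonlinear degree but one must also produce an \emph{actual} degree of the form $st$ or $m(n-1)$ lying in $\cd(G)=\cd(\Alt_n)$, and showing such a degree must occur (not merely could occur) requires using that $V$ is a \emph{minimal} normal subgroup so that $C$ acts with all nontrivial $V$-characters in orbits of a controlled size, together with the extendibility/induction from $I_G(\theta)$ for $\theta\in\Irr(V)$. A secondary subtlety is handling the case where the relevant Frobenius kernel is \emph{not} a $p$-group a priori; here Lemma~\ref{lem:group}(a) is exactly what rules out the degenerate situation $\bar G$ being a nonabelian $p$-group, so the kernel is forced to be the abelian one and Robinson applies cleanly. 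Finally I would dispose of the genuinely small cases: for $14\le n$ we are already past all exceptional values appearing in Lemmas~\ref{lem:excl1} and~\ref{lem:excl2}, so no separate computer check beyond what those lemmas already absorb is needed, and the contradiction is complete, proving $G$ is nonsolvable.
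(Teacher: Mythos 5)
Your setup is right as far as it goes: the paper also starts from a character $\phi$ of degree $n-1$ with maximal kernel and uses Lemma~\ref{lem:group} together with Robinson's theorem to get a Frobenius quotient $G/L=\overline{N}\cdot\overline{H}$ with elementary abelian minimal normal kernel of order $p^a$ and cyclic complement of order $n-1$. But from that point your plan diverges and has two genuine gaps. First, your proposed ``second Frobenius section'' for the degree $n(n-3)/2$ is not available: Lemma~\ref{lem:group}(b) applies to a character $\chi$ of that degree only under the hypothesis that $n(n-3)/2$ is the \emph{minimal} nonlinear degree of $G/\Kernel(\chi)$, and nothing guarantees such a quotient exists (the quotient may well carry a degree-$(n-1)$ character). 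The paper never forms a second section; instead it stays inside the single section coming from $\phi$ and runs a long Clifford-theoretic analysis: it restricts characters of $G$ of degrees $n(n-3)/2$, $n(n-2)(n-4)/3$ and $\binom{n-1}{i}$ ($i=2,3,5$) down to $N$ and $L$, shows the relevant inertia groups act irreducibly on $\overline{N}$, applies the dichotomy of \cite[Theorem~6.18]{Isaacs} to $\beta_L$, and uses Gallagher-type multiplication inside inertia groups to manufacture degrees such as $n(n-1)(n-3)/2d^i$ and $n(n-1)(n-2)(n-4)/3d'^i$ that Lemma~\ref{lem:excl2} forbids. This is exactly the ``must occur, not merely could occur'' mechanism you flag as an obstacle, and your sketch does not supply it; the hypothesis that $\overline{N}$ is minimal normal and that $\overline H$ acts fixed-point-freely only controls orbits on $\Irr(\overline N)$, not the degrees produced over $L$, so Lemma~\ref{lem:arithmetic}(b) has no established input $p^b\mid n$ (in the paper that divisibility is squeezed out of the Isaacs~6.18 alternative, and even then one must separately handle the case $p^b\nmid n$, leading to $n=2p^b+3$, before concluding $n=p^a$). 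Moreover, even after $n=p^a$ the paper is not done: the $\binom{n-1}{i}$ degrees are needed to force $p=2$ and $a$ odd, and the final contradiction is the non-integrality of $\theta_1(1)=(2^a-3)/2$ — none of which appears in your endgame.

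Second, your closing claim that for $n\ge 14$ ``we are already past all exceptional values appearing in Lemmas~\ref{lem:excl1} and~\ref{lem:excl2}'' is false, and this matters. Lemma~\ref{lem:excl2}(c) has the exception $n=14$ (indeed $n(n-1)(n-3)/2\in\cd(\Alt_{14})$), which is why the paper must treat $n=14$ by a separate argument with the degree $560$; Lemma~\ref{lem:excl2}(d) has exceptions at $n\in\{18,23,27,31\}$, and $n=16$ is exceptional in~\ref{lem:excl2}(b),(e) — all of these are $\ge 14$ and each receives dedicated handling in the paper ($n=18$ is excluded by a $p$-part computation, $n=16$ via the $\binom{15}{5}$ degree, etc.). As written, your argument would break precisely at these values, so a correct proof along these lines must both replace the two-section idea with an actual degree-producing mechanism and add the case analysis for $n\in\{14,16,18,23,27,31\}$.
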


\begin{proof}
Suppose toward a contradiction that $G$ is solvable.  Let $\phi$ be an irreducible character of $G$ of degree $n-1$ such that $\Kernel(\phi)$ is of maximal order among the kernels of irreducible characters of $G$ of the same degree.
By Lemma~\ref{lem:group} and \cite[Theorem 1]{Robinson} we see that $G/L=\overline{N}\cdot \overline{H}$ is a Frobenius group, where $L=\Kernel(\phi)$, $N$ and $H$ are subgroups of $G$ containing $L$ such that $\overline{N}$ is a minimal normal subgroup   of $G/L$ of order $p^a$ and $\overline{H}$ is the complement of $\overline{N}$ and is cyclic of order $n-1$.

We first consider the case  $n=14$.  Choose $\eta \in \Irr(G) $ such that $\eta(1)=560=2^4\cdot 5\cdot 7$, then $\eta_N \in \Irr(N)$, by \cite[Theorem 6.18]{Isaacs} we have  $\eta_L=\eta_1, e\eta_1 $ or $\eta_1+ \cdots+ \eta_{p^a}$
where $\eta_i\in \Irr(L)$ and $e^2=p^a$. Since we see that $e$ does not divide $560$ (if $p\mid 70$ then $a=12, e=6$),  $\eta_L=\eta_1$ then $13\cdot 560\in \cd(\Alt_{14})$, a contradiction.

So assume $n> 14$. Let $\chi$ be another irreducible character of $G$ of degree $n(n-3)/2$.
Let $\beta \in \Irr(N)$ be an irreducible constituent of the restriction $\chi_N$. Then $\beta$ extends to $\beta'\in \Irr(I_G(\beta))$.  We claim that $I_G(\beta)$ acts irreducibly on $\overline{N}$.  If $I_G(\beta)=G$ the claim is obvious. So we need only consider the case where $I_G(\beta)$ is a proper subgroup of $G$. Note that
since $|G/I_G(\beta)|$ divides both $n-1$ and $\chi(1)$,
$|G/I_G(\beta)|=(n-1,n(n-3)/2)=2$ with $n=4k+3$.
Let $x\in H$ be such that  $\la \overline{x}\ra = \overline{H}$; then $x^2\in I_G(\beta)$ and $\overline{x}^{2k+1}$ inverses every element of $\overline{N}$, which implies that $x^2$ acts irreducibly on $\overline{N}$, so the claim is true.

Now by \cite[Theorem 6.18]{Isaacs} for $I_G(\beta)$ and $\beta $ we have $\beta_L=\theta_1, e\theta_1$ or $\Sigma_{i\leq p^a}\theta_i$, where $p^a=|\overline{N}|$, $e^2=p^a$ and
$\theta_i\in \Irr(L)$. If $\beta_L=\theta_1$ then $I_G(\beta )\leq I_G(\theta_1)$ and for any
$\alpha \in \Irr(I_G(\beta)/L), \alpha\beta'\in \Irr(I_G(\beta))$.
Note that
$$ \alpha(1)\beta'(1)\leq \frac{n-1}{d}\cdot\frac{n(n-3)}{2d}$$
where $d=|G/I_G(\beta)|$. Since we can choose $\alpha(1)=(n-1)/d$, either $n(n-1)(n-3)/2$,  or ${n(n-1)(n-3)}/{2d}$ or
${n(n-1)(n-3)}/{2d^2}$  lies in $\cd(\Alt_n)$, which is impossible.
Thus we have either $p^a\mid \chi(1)$ or $a=2b$ with $p^b\mid \chi(1)$.
By Lemma~\ref{lem:arithmetic}(b), if $p^b\mid n$
we have  $n=p^b$ or $p^a$, thus $n=p^a$ as $\overline{N}$ minimal in $G/L$.  For the case where $p^b$ does not divide $n$, we claim that either $p\mid n$
or $n=2p^b +3 $. Suppose $(p, n)=1$, then $p^b\mid(n-3)$.
If $b=a$, then $p^a-1 \leq n-2 < n-1$, this is contradictory to $p^a-1$ divisible by $n-1$. So $a=2b$ with $n-3= t'p^b$ and $p^{2b}-1=s'(n-1)$ for some positive integers $s'$ and $t'$. Since
$$s't'p^b =s'(n-3)=s'(n-1)- 2s'=p^{2b}-1 -2s',$$
we have $2s'+1=kp^b$ for some positive integer $k$. Thus
$$2p^b=2s't'+2k=t'(kp^b-1)+2k.$$
It follows that $(t'k-2)p^b=t'-2k$, which has as its only solution $t'=2$ with $k=1$, so $n=2p^b +3 $, and the claim is true. In particular, if $(p, n)=1$ then
$n\not=  18, 23, 27 $ or $31$, and if $n=p^c$ for some positive integer $c$ then $c=a$.
In any case, we can exclude $n=18$:  when $p\mid n=18$, either $p=2$ with $a=2b=8$ or $p=3$ with $a=2b=16$, from which we see that $p^b$ does not divide $18(18-3)/2 = 3^3\cdot 5$.

Now choose $\zeta \in \Irr(G)$ with $\zeta(1)=n(n-2)(n-4)/3$. Let $\sigma\in \Irr(N)$ be an irreducible constituent of $\zeta_N$. We claim that $I_G(\sigma)$ acts irreducibly on $\overline{N}$. Suppose this is not the case, then $I_G(\sigma)$ is bound to be a proper subgroup of $G$ and acts reducibly on $\overline{N}$. So $\zeta_N=\sigma + \sigma_2 + \sigma_3$ and $3=(n-1, n(n-2)(n-4)/3)$ with $n=9k+4$, where $\sigma, \sigma_2$ and $\sigma_3$ are conjugate irreducible characters of $N$.  As
$n-1=9k+3=3(3k+1), x^3\in I_G(\sigma)$,
$$\overline{N}=V_1 \oplus V_2 \oplus V_3$$
where the $V_i$'s are $\overline{x}^3$-spaces and thus $3\mid a$.  Let $C$ be the centralizer of $\overline{x}$ in $\GL(\overline{N})$; we see that $C$ is cyclic of order $p^a-1$ and conjugate in $\GL(\overline{N})$ to the multiplicative group $\F_{p^a}^{*}$ of the Galois field $\F_{p^a}$ ($\overline{N}$ is viewed as the additive group of  $\F_{p^a}$), so the action of
$\la \overline{x}\ra$ on $\overline{N}$ is just the multiplication by elements of  $\F_{p^a}^{*}$.  Now $\overline{N}$ can be viewed as a space over $\F_{p^s}$ where $s=1$ if $3\mid (p-1)$ and $s=2$ if $3\mid (p+1)$, so $\overline{x}$ can be viewed as an element of $\GL(m', p^s)$ where $m'$ is the dimension of $\overline{N}$  over $\F_{p^s}$.  Note that each $V_i$ is of dimension divisible by $s$
and $\la\overline{x}^{3k+1}\ra$ is the only subgroup of order $3$ in $\F_{p^a}^{*}$,  thus $V_i$ is  also  an $\la\overline{x}\ra$-space, a contradiction. Hence the claim holds true.  Evidently $\sigma$ extends to $\sigma'\in \Irr(I_G(\sigma))$.   By \cite[Theorem 6.18]{Isaacs} for $I_G(\sigma)$ and $\sigma $ we have $\sigma_L=\sigma_1, e\sigma_1$ or $\Sigma_{i\leq p^a}\sigma_i$, where $e^2=p^a$ and
$\sigma_i\in \Irr(L)$.  We claim that $n=p^a$. Suppose otherwise that $n\not=p^a$, then as discussed above $p^b$ does not divide $n$ and $n\not= 18, 23, 27 $ or $31$. If $\sigma_L=\sigma_1$ then $I_G(\sigma )\leq I_G(\sigma_1)$ and for any
$\alpha' \in \Irr(I_G(\sigma)/L), \alpha'\sigma'\in \Irr(I_G(\sigma))$. Note that \[\alpha'(1)\sigma'(1)\leq \frac{n-1}{d'}\frac{n(n-2)(n-4)}{3d'},\] where $d'=|G/I_G(\sigma)|$. Since we can choose $\alpha'(1)=(n-1)/d'$, one of $$\frac{n(n-1)(n-2)(n-4)}{3}, \: \frac{n(n-1)(n-2)(n-4)}{3d'},\: \frac{n(n-1)(n-2)(n-4)}{3d'^2}$$
lies in $\cd(\Alt_n)$, which is impossible by
Lemma~\ref{lem:excl2}(d). Thus we have either $p^a\mid \zeta(1)$ or $a=2b$ with $p^b\mid \zeta(1)$. Now we have $p^b\mid (\chi(1), \zeta(1))$ and $p^b$ does not divide $n$, so $p\mid (n-3, (n-2)(n-4))=1$,  which is absurd. Thus we have $n=p^a$ as claimed.

Choose $\rho_i \in \Irr(G)$ such that $\rho_i(1)=\binom{n-1}{i}$, $i=2, 3$
or $5$, then we have $$(\rho_i)_N=\zeta_i+\zeta_{i2}+ \cdots + \zeta_{it}$$
where $t=|G/I_G(\zeta_i)|$.
Note that $\zeta_i$ extends to $\zeta_i' \in \Irr(I_G(\zeta_i))$.
Since $(p, \rho_i(1))=1$, $(\zeta_i)_L=\delta_i \in \Irr(L)$.
Thus for any $\lambda \in \Irr(N/L), \lambda \zeta_i\in \Irr(N)$ and
$\lambda \zeta_i \not= \lambda' \zeta_i$ if $\lambda \not=\lambda' $.
We claim that $t=n-1$. Suppose that $t< n-1$ then $I_G(\zeta_i)/N$
is cyclic of order ${(n-1)}/{t} > 1$.
If $I_G(\zeta_i)=I_G(\delta_i )$ then for
$\alpha_i \in \Irr(I_G(\zeta_i)/L)$ with $\alpha_i(1)={(n-1)}/{t}$,
$(\alpha_i\zeta_i')^G \in \Irr(G)$ and is of degree
${(n-1)}\rho_i(1)/t\in \cd(\Alt_n)$, which is impossible
(for $ i= 2$  with $n$ arbitrary or  $i=3$ with $n \not= 16$ or $i=5$
with $n= 16$).  So $I_G(\zeta_i)<I_G(\delta_i )$.
Now for any $y\in I_G(\delta_i)\setminus I_G(\zeta_i)$
(of course we choose $y\in \la x \ra $), there is
a nontrivial linear character $\lambda \in \Irr(N/L)$ such that  $\zeta_i^y=\lambda \zeta_i$.
It follows that for any $v\in I_G(\zeta_i)\setminus N$
with $v\in \la x \ra$,  $$\lambda \zeta_i=(\zeta_i)^{vy}= (\zeta_i)^{yv}= (\lambda \zeta_i)^v=\lambda^v\zeta_i,$$ so  $\lambda =\lambda^v$, contradictory to the fixed point free action of $ I_G(\zeta_i)/N$ on $\Irr(N/L)$. Thus $t=n-1$ as claimed. It follows $I_G(\zeta_i)=N$, and $\zeta_i(1)={\rho_i(1)}/{(n-1)}$ which implies $n\not=16$ (as $\binom{n-1}{5}/(n-1)$ is not an integer for $n=16$)  and $2\mid (p^a-2)$ with $6\mid (p^a-2)(p^a-3)$, so $p=2$ and $a=2k+1$. As discussed above, for  $\chi$ and $\beta$,  $|G/I_G(\beta)|=(n-1, n(n-3)/2)=1$,  $G=I_G(\beta)$ and $\beta=\chi_N$. Note that $p^a$ is not a square, $\beta_L=\Sigma_{i\leq 2^a}\theta_i$, thus $\theta_1(1)=(2^a-3)/2$ which is absurd.  We are done.
\end{proof}

\section{Nonabelian composition factors}\label{sec:cf}

Recall that a group $G$ is said to be an {\em almost simple group}
if there exists a nonabelian simple group $S$ such that $S\unlhd
G\leq \Aut(S)$.
In this section, we show that every nonabelian chief
factor of a finite group $G$ with $\cd(G)=\cd( \Alt_n),n\geq 14$,  is isomorphic to $ \Alt_n$.

\begin{table}
 \begin{center}
  \caption{Sporadic simple groups and the Tits group} \label{Tab1}
  \begin{tabular}{lllr|lllr}
   \hline
   S &$p(S)$ & $\theta_i$ & $\theta_i(1)$ &S&$p(S)$& $\theta_i$&$\theta_i(1)$\\ \hline
   $\textrm{M}_{11}$& $11$ &$\chi_5$&$11$& $\textrm{O'N}$&$31$&$\chi_2$&$2^6\cdot 3^2\cdot 19$\\

   &&$\chi_6$&$2^4$&&&$\chi_{7}$&$2^7\cdot 11\cdot 19$\\

   $\textrm{M}_{12}$&$11$&$\chi_6$&$3^2\cdot 5$&$\textrm{Co}_3$&$23$&$\chi_2$&$23$\\
   &&$\chi_7$&$2\cdot 3^3$&&&$\chi_5$&$5^2\cdot 11$\\

   $\textrm{J}_1$&$19$&$\chi_2$&$2^3\cdot 7$&$\textrm{Co}_2$&$23$&$\chi_2$&$23$\\

   &&$\chi_4$&$2^2\cdot 19$&&&$\chi_{4}$&$5^2\cdot 11$\\

   $\textrm{M}_{22}$&$11$&$\chi_2$&$3\cdot 7$&$\textrm{Fi}_{22}$&$13$&$\chi_{2}$&$2\cdot 3\cdot 13$\\
   &&$\chi_3$&$3^2\cdot 5$&&&$\chi_{3}$&$3\cdot 11\cdot 13$\\

   $\textrm{J}_2$&$7$&$\chi_6$&$2^2\cdot 3^2$&$\textrm{HN}$&$19$&$\chi_{4}$&$2^3\cdot 5\cdot 19$\\
   &&$\chi_{7}$&$3^2\cdot 7$&&&$\chi_{5}$&$2^{4}\cdot 11\cdot 19$\\

   $\textrm{M}_{23}$&$23$&$\chi_2$&$2\cdot 11$&$\textrm{Ly}$&$67$&$\chi_3$&$2^4\cdot 5\cdot 31$\\
   &&$\chi_3$&$3^2\cdot 5$&&&$\chi_{4}$&$2\cdot 11\cdot 31\cdot 67$\\

   $\textrm{HS}$&$11$&$\chi_2$&$2\cdot 11$&$\textrm{Th}$&$31$&$\chi_2$&$2^3\cdot 31$\\
   &&$\chi_3$&$7\cdot 11$&&&$\chi_3$&$7\cdot 19\cdot 31$\\

   $\textrm{J}_3$&$19$&$\chi_6$&$2^2\cdot 3^4$&$\textrm{Fi}_{23}$&$23$&$\chi_2$&$2\cdot 17\cdot 23$\\
   &&$\chi_{9}$&$2^4\cdot 3\cdot 17$&&&$\chi_{3}$&$2^{2}\cdot 3\cdot 13\cdot 23$\\

   $\textrm{M}_{24}$&$23$&$\chi_2$&$23$&$\textrm{Co}_1$&$23$&$\chi_2$&$2^2\cdot 3\cdot 23$\\
   &&$\chi_3$&$3^2\cdot 5$&&&$\chi_{3}$&$13\cdot 23$\\

   $\textrm{McL}$&$11$&$\chi_2$&$2\cdot 11$&$\textrm{J}_4$&$43$&$\chi_3$&$31\cdot 43$\\
   &&$\chi_{3}$&$3\cdot 7\cdot 11$&&&$\chi_{4}$&$3^2\cdot 29\cdot 31\cdot 37$\\

   $\textrm{He}$&$17$&$\chi_6$&$2^3\cdot 5\cdot 17$&$\textrm{Fi}_{24}'$&$29$&$\chi_2$&$13\cdot 23\cdot 29$\\
   &&$\chi_{9}$&$3\cdot 5^2\cdot 17$&&&$\chi_3$&$3\cdot 7^2\cdot 17\cdot 23$\\

   $\textrm{Ru}$&$29$&$\chi_2$&$2\cdot 3^3\cdot 7$&$\textrm{B}$&$47$&$\chi_2$&$3\cdot 31\cdot 47$\\
   &&$\chi_{4}$&$2\cdot 7\cdot 29$&&&$\chi_{3}$&$3^{3}\cdot 5\cdot 23\cdot 31$\\

   $\textrm{Suz}$&$13$&$\chi_2$&$11\cdot 13$&$\textrm{M}$&$71$&$\chi_2$&$47\cdot 59\cdot 71$\\
   &&$\chi_{3}$&$2^{2}\cdot 7\cdot 13$&&&$\chi_{3}$&$2^2\cdot 31\cdot 41\cdot 59\cdot 71$\\

   ${}^2\textrm{F}_4(2)'$&$13$&$\chi_5$&$3^3$&$$&$$&$$&$$\\
   &&$\chi_{6}$&$2\cdot 3\cdot 13$&&&$$&\\
   \hline
\end{tabular}
\end{center}
\end{table}

\begin{table}
 \begin{center}
  \caption{Automorphism groups of sporadic simple groups}\label{Tab2}
  \begin{tabular}{l|crr}
   \hline
   $G$  & $p(G)$ & $d_1(G)$&$d_2(G)$\\ \hline
   $\textrm{M}_{12}\cdot 2$&$11$&$22$&$32$\\
   $\textrm{M}_{22}\cdot 2$&$11$&$21$&$45$\\
   $\textrm{J}_2\cdot 2$&$7$&$28$&$36$\\
   $\textrm{HS}\cdot 2$&$11$&$22$&$77$\\
   $\textrm{J}_3\cdot 2$&$19$&$170$&$324$\\
   $\textrm{McL}\cdot 2$&$11$&$22$&$231$\\
   $\textrm{He}\cdot 2$&$17$&$102$&$306$\\
   $\textrm{Suz}\cdot 2$&$13$&$143$&$364$\\
   $\textrm{O'N}\cdot 2$&$31$&$10944$&$26752$\\
   $\textrm{Fi}_{22}\cdot 2$&$13$&$78$&$429$\\
   $\textrm{HN}\cdot 2$&$19$&$266$&$760$\\
   $\textrm{Fi}_{24}'\cdot 2$&$29$&$8671$&$57477$\\
   ${}^2\textrm{F}_4(2)'\cdot 2$&$13$&$27$&$52$\\
   \hline
  \end{tabular}

 \end{center}
\end{table}

\begin{theorem}\label{th:composition} Let $G$ be a group such that $\cd(G)=\cd( \Alt_n)$ with $n\geq 14$. If
$L/M$ is a nonabelian chief factor of $G$ then $L/M\cong  \Alt_n$.
\end{theorem}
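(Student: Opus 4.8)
The plan is to show that a nonabelian chief factor $L/M$ of $G$ must be isomorphic to $\Alt_n$, using the classification of finite simple groups together with the classification results on prime-power-degree representations and small-degree representations cited in the excerpt.

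\medskip

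\noindent\textbf{Setup.} First I would fix a nonabelian chief factor $L/M$ of $G$, so that $L/M\cong S^k$ for some nonabelian simple group $S$ and some $k\ge 1$. By a standard argument (using Clifford theory and the fact that $S$ has a nontrivial irreducible character that extends, e.g.\ the Steinberg character for Lie type, or a suitable character from Tables~\ref{Tab1} and~\ref{Tab2} in general), one produces from an irreducible character of $S$ an irreducible character of $G$ whose degree is divisible by a prime power associated to $S$; in particular every prime in $\rho(S)=\pi(S)$ (Corollary~\ref{cor2}) lies in $\rho(G)=\rho(\Alt_n)=\pi(\Alt_n)$, so $\pi(S)\subseteq \pi(\Alt_n)$, and $p(S)\le p(\Alt_n)=p(n)$. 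Moreover $d_1(S)$ (or a $d_1$ of an almost simple group with socle $S$) gives, via Clifford theory, a degree of $G$ that is at least $d_1(\Alt_n)=n-1$, and this bounds $S$ from below as well.

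\medskip

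\noindent\textbf{Eliminating most simple groups.} The main work is then a case division over the CFSG list:
\begin{enumerate}
\item[(i)] \emph{Sporadic groups and the Tits group.} Here $p(S)$ is one of a short list of primes, so $p(n)$ is bounded; combined with the fact that $\St_S$-type degrees (Table~\ref{Tab1}) or products $\theta_1(1)\theta_2(1)$ must sit inside $\cd(\Alt_n)$, only finitely many $n$ survive, and these are ruled out directly (recall we may assume $n\ge 14$).
\item[(ii)] \emph{Simple groups of Lie type.} Using Lemma~\ref{lem3}: if $S\ne\PSL_2(q),{}^2\mathrm{F}_4(2)'$, then $|S|_p>d_1(G)=n-1$; since $|S|_p$ is itself (essentially) a character degree of $S$ hence must divide a degree of $\Alt_n$, one gets a prime power in $\cd(\Alt_n)$ of size at least $n-1$ — but by the classification of prime power degree representations of $\Alt_n$ and $\Sym_n$ and their covers (\cite{BBOO,Bessenrodt}) such degrees are severely restricted, forcing a contradiction for $n\ge 14$. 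The residual cases $S=\PSL_2(q)$ and ${}^2\mathrm{F}_4(2)'$ are handled by hand: for $\PSL_2(q)$ the degree set is $\{1,q-1,q,q+1\}$ (adjusted for parity) and one checks these cannot be compatible with containing $n-1$ and the quadratic/cubic degrees of $\Alt_n$ unless the group is actually $\Alt_5\cong\PSL_2(4)$ or $\Alt_6\cong\PSL_2(9)$, excluded by $n\ge 14$.
\item[(iii)] \emph{Alternating groups $\Alt_m$.} If $S\cong \Alt_m$, I would use the small-degree list (Lemma~\ref{lem:mindeg}, from \cite{Rasala}) to compare $d_1,d_2,\dots$ of $\Alt_m$ with those of $\Alt_n$. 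Since a chief factor $\Alt_m^k$ with $k\ge 2$ forces products of degrees of $\Alt_m$ into $\cd(\Alt_n)$, and the degree $(m-1)^2$ or $(m-1)m(m-3)/2$ is excluded from $\cd(\Alt_n)$ unless $m=n$ (this is exactly the content of Lemmas~\ref{lem:excl1} and~\ref{lem:excl2}), one gets $k=1$ and $m=n$.
\end{enumerate}

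\medskip

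\noindent\textbf{Main obstacle.} I expect the hardest part to be step (ii), the Lie type case: there the bound $|S|_p>n-1$ together with $\pi(S)\subseteq\pi(n)$ does not immediately pin down $q$, and one must invoke the classification of prime-power-degree characters to convert "$\cd(G)$ contains a large prime power" into a genuine contradiction; several small rank families ($\PSL_2(q)$, $\PSL_3(q)$, $\PSU_3(q)$, $\mathrm{Sp}_4(q)$, Suzuki and Ree groups) may need individual attention because their character degree sets are small and could accidentally embed into $\cd(\Alt_n)$. The alternating case (iii) is conceptually the subtle one — ensuring $m=n$ and not some other $m$ with overlapping low degrees — but Lemmas~\ref{lem:excl1}, \ref{lem:excl2} are tailored precisely for this, so once the Lie type and sporadic cases are cleared, (iii) should go through cleanly.
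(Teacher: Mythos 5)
Your overall strategy (CFSG case division, prime power degrees via \cite{BBOO}, the small-degree list from \cite{Rasala}) matches the paper's, but two of your key steps would not go through as written. First, the alternating case (iii) is mishandled. Lemmas~\ref{lem:excl1} and~\ref{lem:excl2} exclude degrees built from $n$ itself (such as $m(n-1)$ for $m\mid n-1$, or $n(n-1)(n-3)/2^i$); they say nothing about $(m-1)^2$ or $(m-1)m(m-3)/2$ for an arbitrary $m\neq n$, so the exclusion you attribute to them is simply not available. Moreover, the crucial subcase is $k=1$ with $m\neq n$, where no products of degrees arise at all; your sketch does not address it. The paper's actual argument is different: for $k\geq 2$ it bounds $k\leq 2n/(m-2)$ using Bertrand's postulate (Lemma~\ref{lem2}, since $p(\Alt_m)>m/2$) and sandwiches $k(m-1)$ and $k(m-1)(m-2)/2$ between consecutive entries of Lemma~\ref{lem:mindeg} to force the contradictory inequalities $n\geq 2m-1$ and $m\geq n$; for $k=1$ it shows $m-1<d_2(\Alt_n)$ (again via Bertrand, producing a prime in $(n,2n]$ dividing $|\Alt_m|$ but not $|\Alt_n|$ if $m\geq 2n$), whence $m-1=n-1$.

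Second, your treatment of $\PSL_2(q)$ is not a routine ``compatibility of degree sets'' check. The only information available is the one-way containment $\cd(G/C)\subseteq\cd(\Alt_n)$, where $G/C$ is almost simple with socle $\PSL_2(q)$ and need not equal $\PSL_2(q)$ (field and diagonal automorphisms change the degree set), so comparing $\{1,q-1,q,q+1\}$ with the degrees of $\Alt_n$ proves nothing by itself; the remark about $\Alt_5\cong\PSL_2(4)$, $\Alt_6\cong\PSL_2(9)$ is beside the point. The paper instead derives $q=|S|_p=n-1$ from the Steinberg character and \cite[Theorem~5.1]{BBOO}, bounds every degree of $G/C$ by $|\Out(S)|(q+1)\leq q(q+1)<d_4(\Alt_n)$, concludes $|\cd(G/C)|=4$, and then invokes the Malle--Moret\'o classification \cite{Malle05} of nonsolvable groups with four character degrees to reach a contradiction; some such additional input is genuinely needed. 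Related smaller gaps: Lemma~\ref{lem3} as you use it presupposes $k=1$ (it concerns almost simple groups with socle $S$), so the reduction from $S^k$ to $S$ must be done first for the Lie-type and sporadic cases (the paper's Claim~1, e.g.\ $|S|_p^k=n-1$ together with $k|S|_p\geq n-1$ for Lie type), and the sporadic case with $\Out(S)=1$ requires \cite[Theorem~12]{Tong1} rather than a direct finite check.
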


\begin{proof} Assume that $L/M$ is a nonabelian chief factor of $G$. Then $L/M\cong S^k$,  where $k\geq 1$ and
$S$ is a nonabelian simple group. Let $C$ be a normal subgroup of $G$ such that $C/M=\Centralizer_{G/M}(L/M)$.
Then $LC/C\cong S^k$ is a unique minimal normal subgroup of $G/C$ so
that $G/C$ embeds into ${{\Aut}}(S)\wr \Sym_k$,  where $\Sym_k$ is the
symmetric group of degree~$k$. Let $B={{\Aut}}(S)^k\cap G/C$. Then
$|G/C:B|$ is isomorphic to a transitive subgroup of $\Sym_k$. Suppose
that $\theta\in \Irr(S)$ such that $\theta$ extends to $\Aut(S)$.
Let $\psi=\theta\times 1\times\cdots\times 1$ and $\varphi=\theta^k$
be irreducible characters of $LC/C\cong S^k$. By the character
theory of wreath products, $\varphi$ extends to
$\varphi_0\in\Irr(G)$ so $\theta(1)^k\in \cd(G)$. Let $I$ be the
inertia group of $\psi$ in $G$. Then $\psi$ extends to $\psi_0\in
\Irr(I)$ and hence $k\psi_0(1)=k\theta(1)\in\cd(G)$.
It follows from Corollary \ref{cor2} that if $r$ is any prime
divisor of $|S|$,  then there exists $\phi\in\Irr(S)$ such that
$r\mid \phi(1)$. Let $\gamma=\phi^k\in\Irr(LC/C)$. As $LC/C\unlhd
G/C$,  we deduce that $\gamma(1)=\phi(1)^k$ must divide $\chi(1)$ for
some $\chi\in\Irr(G)$ by \cite[Lemma~6.8]{Isaacs}. As
$\cd(G)=\cd( \Alt_n)$,  we obtain that $\chi(1)$ divides $| \Alt_n|$,  which
implies that $r^k\mid {n!}/{2}$. Recall that $p(S)$ is the largest prime divisor of $S$ and since $|\pi(S)|\ge 3$ we have $p(S)\ge 5$. Again, set $d_j=d_j(G)$ for $j\ge 1$.

\medskip
{\bf Claim 1.} $k=1$. Suppose that $k\geq 2$. 
By the discussion above,  $p(S)^k$ divides ${n!}/{2}$,  so by Lemma \ref{lem4} we have
\begin{equation}\label{eq1}
    2\leq k\leq \dfrac{n}{p(S)-1}.
\end{equation}

Observe that if $p$ is any prime and ${n}/{2}<p\leq n$,  then $| \Alt_n|_p=p$. Since $k\geq 2$
and $p(S)^k$ divides $| \Alt_n|$,  we deduce that $p(S)\leq {n}/{2}$ and thus

\begin{equation}\label{eq2}
    n\geq 2p(S).
\end{equation}

Using the classification of finite simple groups, we consider the following cases.

\medskip
 {\bf (1a)} $S$ is a sporadic simple group or the Tits group. Using \cite{atlas}, for each
sporadic simple group or the Tits group $S$,  there exist two nontrivial irreducible characters
$\theta_i\in\Irr(S)$ such that both $\theta_i$ extend to $\Aut(S)$ and $11\leq
\theta_1(1)<\theta_2(1)$ (see Table \ref{Tab1}). By the argument above, we obtain that
$k\theta_i(1)\in\cd(G)$ for $i=1,2$. Using \cite{GAP} for $n=14$ and  Lemma~\ref{lem:mindeg}$(a)$ for $n\geq 15$,  we have

$$\begin{array}{lll}
    d_1& =& n-1 \\
    d_2 & =&n(n-3)/2 \\
    d_3 & =&(n-1)(n-2)/2 \\
    d_4 & =&n(n-1)(n-5)/6.
  \end{array}
$$

We first claim that
\begin{equation}\label{eqn}
k\theta_1(1)\leq d_3=\frac{(n-1)(n-2)}{2}.
\end{equation}
As $n\geq 2p(S)$,  by checking Table~\ref{Tab1} we obtain that \[\frac{(n-1)(n-5)}{6}\geq \frac{(2p(S)-1)(2p(S)-5)}{6}>\frac{\theta_1(1)}{p(S)-1}.\]
Since $k\leq {n}/{(p(S)-1)}$,  we have \[d_4=\frac{n(n-1)(n-5)}{6}>\frac{n\theta_1(1)}{p(S)-1}\geq k\theta_1(1).\]
Thus $k\theta_1(1)<d_4$ and so $k\theta_1(1)\le d_3$ which proves our claim.
As $k\geq 2$ and $d_3=(n-1)(n-2)/2$,  we obtain

\begin{equation}\label{eq3}
\frac{(n-1)(n-2)}{2}\geq 2\theta_1(1).
\end{equation}
We now consider the following cases.

\smallskip
$(i)$
$S\in\{\textrm{M}_{11},\textrm{M}_{12},\textrm{J}_1,\textrm{M}_{22},\textrm{M}_{23},\textrm{HS},\textrm{M}_{24},\textrm{Ru},{}^2\textrm{F}_4(2)',\textrm{Co}_1,\textrm{Co}_2,\textrm{Co}_3\}$.

Since $n\geq 2p(S)$,  for all simple groups in this case we can check that
$$\frac{\theta_2(1)}{p(S)-1}<\frac{2p(S)-3}{2}\le\frac{n-3}{2}.$$
From \eqref{eq1}, we deduce that $$k\theta_2(1)\leq
\frac{n\theta_2(1)}{p(S)-1}< \frac{n(n-3)}{2}=d_2.$$ But this is impossible as $k\theta_2(1)>k\theta_1(1)\geq d_1$ so $k\theta_2(1)\geq d_2$.

\smallskip
$(ii)$
$S\in\{\textrm{J}_2,\textrm{J}_3,\textrm{McL},\textrm{He},\textrm{Suz},\textrm{Fi}_{22},\textrm{HN},\textrm{Ly},\textrm{Th},\textrm{Fi}_{23},\textrm{Fi}_{24}',\textrm{B}\}$.

Since $n\geq 2p(S)$,  we
deduce  that $$k\theta_2(1)\leq
\frac{n\theta_2(1)}{p(S)-1}<\frac{n(n-1)(n-5)}{6}=d_4$$ unless $S\in \{\textrm{Fi}_{24}',\textrm{B}\}$. For the exceptions,  applying
\eqref{eq3}, we have that $n\geq 134$ when $S\cong \textrm{B}$ and
$n\geq 188$ when $S\cong \textrm{Fi}_{24}'$. For these cases, we also obtain that $k\theta_2(1)<d_4$.   Thus $k\theta_2(1)\leq d_3$.
Observe that $(d_2,d_3)=1$ and  $(d_1,d_2)\leq 2$. For
each sporadic simple group $S$ in this case, we can check that
$(\theta_1(1),\theta_2(1))\geq 2$ and so
$(k\theta_1(1),k\theta_2(1))\geq 4$. Since $d_1\leq k\theta_1(1)<k\theta_2(1)\leq d_3$ and $(k\theta_1(1),k\theta_2(1))\geq 4$,  we have $k\theta_1(1)=d_1=n-1$ and $$k\theta_2(1)=d_3=\frac{1}{2}(n-1)(n-2)=\frac{1}{2}k\theta_1(1)(n-2).$$
Hence
 $2\theta_2(1)=(n-2)\theta_1(1)$. In particular,  $\theta_1(1)$ divides $2\theta_2(1)$ and $$n=\frac{2\theta_2(1)}{\theta_1(1)}+2.$$ Inspecting Table
\ref{Tab1}, we deduce that $S\cong \textrm{McL}$ or $S\cong \textrm{Fi}_{22}$. If $S\cong \textrm{McL}$,  then $n=23$. But
then $n-1=22=k\theta_1(1)$,  which implies that $k=1$,  a
contradiction. Similarly, if $S\cong \textrm{Fi}_{22}$,  then
$n=13<14$,  a contradiction.

\smallskip
$(iii)$ $S\in \{\textrm{O'N},\textrm{J}_4,\textrm{M}\}$. Firstly, by applying \eqref{eq3}, we have that $n\geq 889$
when $S\cong \textrm{M}$ and $n\geq 211$ when $S\cong \textrm{O'N}$. Also by \eqref{eq2}, we have that
$n\geq 86$ when $S\cong \textrm{J}_4$. Observe that for each simple group $S$ in this case, we have
$$k\theta_2(1)\leq \frac{n\theta_2(1)}{p(S)-1}<d_7=\frac{n(n-1)(n-2)(n-7)}{24}.$$
Thus $k\theta_2(1)\leq d_6 =n(n-2)(n-4)/3$. If $k\theta_2(1)\leq d_3$,  then we can argue as in case $(ii)$ to obtain
a contradiction. Thus we can assume that $$k\theta_2(1)\geq d_4=\frac{n(n-1)(n-5)}{6}.$$ Combining with \eqref{eqn}, we obtain that
\begin{equation}\label{eq4}
n(n-5)\theta_1(1)\leq 3\theta_2(1)(n-2).
\end{equation}
If $S\cong \textrm{O'N}$ or $\textrm{M}$,  then we can check that \eqref{eq4} cannot happen. Assume $S\cong
\textrm{J}_4$. If $k\theta_1(1)=d_1=n-1$,  then $n\geq 1+2\theta_1(1)=2667$. But then
\eqref{eq4} cannot happen. Thus $k\theta_1(1)\geq d_2=n(n-3)/2$. Hence $n(n-3)/2\leq
n\theta_1(1)/42$,  which implies that $n\leq 66$,  a contradiction.

\medskip
{\bf (1b)}  $S\cong  \Alt_m$,  $m\geq 7$. Let $\chi_i\in\Irr(\Sym_m),1\le i\le 3$,  be
irreducible characters of $\Sym_m$  labeled by the partitions $(m-1,1),
(m-2,2)$ and $(m-2,1^2)$,  respectively. As these partitions are not
self-conjugate, we deduce that for all $i$,  $\chi_i$ are still
irreducible upon restriction to $ \Alt_m$. Let $\theta_i\in\Irr(S)$ be
the restrictions of $\chi_i$ to $ \Alt_m$. Then $\theta_i\in\Irr(S)$ are
all extendible to $\Aut(S)\cong \Sym_m$. By the Hook formula, we obtain
that $\theta_1(1)=m-1$,  $\theta_2(1)=m(m-3)/2$ and
$\theta_3(1)=\theta_2(1)+1=(m-1)(m-2)/2$. By Lemma \ref{lem2}, there
is a prime $p$ such that $m/2<p\leq m$ for each $m\geq 7$. Hence
$p(S)>m/2$,  where $p(S)$ is the largest prime divisor of the order
of $S\cong  \Alt_m$. By \eqref{eq1}, we have that
$$k\leq \frac{n}{p(S)-1}<\frac{n}{m/2-1}=\frac{2n}{m-2}.$$

Assume first that $k\theta_1(1)\geq
d_2=n(n-3)/2$. As $(m-1)/(m-2)<2$,  we have
$$\frac{n(n-3)}{2}\leq \frac{2n(m-1)}{m-2}<4n.$$ It follows that
$n<11$,  a contradiction. Thus $k\theta_1(1)<d_2$ and so
$k\theta_1(1)=d_1$ which yields that $n-1=k(m-1)$. Since $k\geq
2$,  we deduce that $n\geq 2(m-1)+1=2m-1$. As
$1<k\theta_1(1)<k\theta_2(1)<k\theta_3(1)$,  we see that
$k\theta_3(1)\geq d_3$. Hence $$\frac{k(m-1)(m-2)}{2}\geq
\frac{(n-1)(n-2)}{2}.$$ Substituting $n-1=k(m-1)$ and simplifying,
we have ${m-2}\geq {n-2}$,  which implies that
$m\geq n$. Combining with the previous claim that $n\geq 2m-1$,  we
get a contradiction.

\smallskip
{\bf (1c)} $S$ is a simple group of Lie type in characteristic
$p$ and $S\neq {}^2\textrm{F}_4(2)'$. It is well known that $S$ possesses an
irreducible character $\theta\in\Irr(S)$ of degree $|S|_p\geq 4$
such that $\theta$ extends to $\Aut(S)$. Let $\varphi=\theta^k$. We
know that $\varphi$ extends to $G$ and thus
$\varphi(1)=\theta(1)^k=|S|_p^k\in\cd(G)$. Hence $G$ possesses a
nontrivial prime power degree. By \cite[Theorem~5.1]{BBOO},  $n-1=\theta(1)^k$ and thus $\theta(1)^k=d_1$. Since
$1<k\theta(1)\in\cd(G)$,  we must have $k\theta_1(1)\geq
d_1=\theta_1(1)^k$. However this inequality cannot happen as
$k\geq 2$ and $\theta(1)\geq 4$.

\medskip
{\bf Claim 2.} $S$ is not a sporadic simple group nor the Tits
group. Assume by contradiction that $S$ is a sporadic simple
group or the Tits group. We see that $G/C$ is an almost simple
group with socle $LC/C\cong S$. If $\Out(S)$ is trivial, then $G/C\cong S$ and
hence $\cd(S)\subseteq \cd( \Alt_n)$ so that by
\cite[Theorem~12]{Tong1}, we have $S\cong  \Alt_n$, a
contradiction. Thus we can assume that $\Out(S)$ is nontrivial and
then by \cite{atlas}, $G/C\cong \Aut(S)\cong S\cdot
2$ and so $\cd(S\cdot 2)\subseteq \cd( \Alt_n)$. It follows that
$d_j(S\cdot 2)\geq d_j$ for all $j\geq 1$. As $n\geq 14$,  we
have $$d_2(S\cdot 2)\geq d_2( \Alt_n)\geq 14(14-3)/2=77,$$  hence by checking Table~\ref{Tab2}, $S$ is one of the following simple groups
$$\textrm{HS},\textrm{J}_3,\textrm{McL},\textrm{He},\textrm{Suz},\textrm{O'N},\textrm{Fi}_{22},\textrm{HN},\textrm{Fi}_{24}'.$$ If $d_1(S\cdot 2)=d_1$,  then $n=d_1(S\cdot 2)+1\geq 22$.
But then $d_4(S\cdot 2)<d_4$,  which is impossible. Thus we
can assume that $$d_1(S\cdot 2)\geq d_2=\frac{n(n-3)}{2}.$$ Clearly, $\pi(S\cdot 2)\subseteq
\pi( \Alt_n)$,  so $n\geq n_0$,  where
$n_0=\text{max}\{14,p(S\cdot 2)\}$.

\smallskip
{\bf (2a)} $S\in \{\textrm{J}_3, \textrm{McL}, \textrm{He},  \textrm{HN}\}$. For these groups, we have that $$d_4\geq \frac{n_0(n_0-1)(n_0-5)}{6}> d_2(S\cdot 2)$$ and
thus $$d_2(S\cdot 2)\leq d_3=\frac{(n-1)(n-2)}{2}.$$ As $d_1(S\cdot 2)\geq
d_2$,  we must have that $d_2(S\cdot 2)=d_3$ and $d_1(S\cdot
2)=d_2$ yielding that $d_2(S\cdot 2)=d_1(S\cdot 2)+1$,  which
is impossible by checking Table~\ref{Tab2}.

\smallskip
{\bf (2b)}  $S\in \{\textrm{Suz}, \textrm{Fi}_{22}\}$. If $n\geq 16$,  then $d_4>d_2(S\cdot 2)$ and we can argue as in the previous case to obtain a contradiction. For $14\leq n\leq 15$,  direct calculation using \cite{GAP} shows that $\cd(S\cdot 2)\nsubseteq \cd(\Alt_n)$.

\smallskip
{\bf (2c)}   $S\cong \textrm{O'N}$. As $n\geq 31$,  we have that $d_7\geq
26970>d_2(S\cdot 2)$ and hence $$d_2(S\cdot 2)=26752\leq
d_6=\frac{n(n-2)(n-4)}{3}.$$ Solving this inequality, we have
$n\geq 46$. But then $d_7(S\cdot 2)=58653<d_7$,  a
contradiction.

\smallskip
{\bf (2d)}   $S\cong \textrm{Fi}_{24}'$. As $n\geq 29$,  we have that
$d_7\geq 20097>d_1(S\cdot 2)$ and hence $$d_1(S\cdot 2)=8671\leq
d_6=\frac{n(n-2)(n-4)}{3}.$$ Solving this inequality, we obtain that
$n\geq 32$ and then $d_5\geq 8990>d_1(S\cdot 2)$ so
$$d_1(S\cdot 2)\in \{d_2,d_3,d_4\}.$$ As $d_2\leq
d_1(S\cdot 2)$,  we deduce that $n\leq 133$. However the equations $d_1(S\cdot 2)=d_j$ for $j=2,3,4$,  have no integer solution $n$ in the range $32\leq n\leq 133$.

\medskip
{\bf Claim 3.} $S$ is not a simple groups of Lie type. Assume that $LC/C\cong S$,  where $S\neq
{}^2\textrm{F}_4(2)'$ is a simple group of Lie type in characteristic $p$.  Let $\theta\in\Irr(S)$ be the
Steinberg character of $S$. Arguing as in (1c) above, we have that $n-1=\theta(1)=|S|_p$. Thus
$|S|_p=d_1$ is the smallest nontrivial degree of~$G$. By Lemma \ref{lem3}, we must have
$S\cong \PSL_2(q)$,  where $q=p^f$ for some integer  $f\geq 1$. Hence $G/C$ is an almost simple group
with socle $LC/C\cong \PSL_2(q)$. Observe that $q+1$ is the largest character degree of $\PSL_2(q)$. Now
let $\mu\in\Irr(S)$ be any nontrivial irreducible character of $S$. As $|\Out(S)|=(2,q-1)f\leq
q=p^f$,  if $\chi\in\Irr(G/C)$ is an irreducible constituent of $\mu^{G/C}$,  then $\chi(1)\leq
|G:LC|\mu(1)\leq |\Out(S)|(q+1)\leq q(q+1)$. Hence $q(q+1)$ is an upper bound for all  degrees of $G/C$. As $n-1=|S|_p$,  we have that $n=q+1$. Since $n\geq 14$
$$q(q+1)=(n-1)n<\frac{n(n-1)(n-5)}{6}=d_4$$  which yields that for all $\chi\in\Irr(G/C)$,
$\chi(1)\leq d_3$ and so $|\cd(G/C)|\leq 4$. By \cite[Theorem~12.15]{Isaacs}, we deduce that
$|\cd(G/C)|= 4$ as $G/C$ is nonsolvable. By  \cite[Corollary ~B]{Malle05},
$\cd(G/C)$ is either $\{1,s-1,s,s+1\}$ or $\{1,9,10,16\}$,  where $s$ is some prime power.

Assume first that $\cd(G/C)=\{1,s-1,s,s+1\}$. Then $s-1\ge d_1=n-1=q$.
Since
$s+1 \leq d_3=(n-1)(n-2)/2$,  we deduce that $s+1=(n-1)(n-2)/2$,
$s=n(n-3)/2$ and $s-1=n-1$.
The latter equation implies that $s=n$.
But then as $s=n(n-3)/2$,
we have $n=5<14$,  a contradiction.

Assume $\cd(G/C)=\{1,9,10,16\}$. Then $d_1(G/C)=9\geq d_1=n-1$,  which implies that $n-1\le 9$, so $n\le 10<14$,  a contradiction.

\medskip
{\bf Claim 4.} Show $S\cong  \Alt_n$. We have shown that $S\cong  \Alt_m$,
where $m\geq 7$. It suffices to show that $m=n$. As in the proof of
(1b) above, $m-1,m(m-3)/2$ and $(m-1)(m-2)/2$ are
degrees of $G$. We see that $m-1\geq d_1=n-1$ so
$m\geq n\geq 14$. If $m-1\geq d_2$,  then $$m\geq
d_2+1=d_3=(n-1)(n-2)/2$$ and so as $n\geq 14$,  we have
$d_3\geq 2n$,  and thus $m\geq 2n>n$. By Lemma \ref{lem2}, there
is a prime $p$ such that $n<p\leq 2n\leq m$. It follows that
$p\in\pi(S)\setminus\pi( \Alt_n)$,  which is a contradiction. This shows that
$m-1<d_2$ and hence $m-1=d_1=n-1$. Thus $m=n$ as required. \end{proof}

\section{Finite perfect groups}\label{sec:Largedegrees}

In this section, we prove some properties on the character degree set of a finite perfect group having a special normal structure. Recall that $k(G)$ and $b(G)$ denote the number of conjugacy classes and the largest degree, respectively, of a finite group $G$. We begin with the following result.
\begin{lemma}\label{lem:large degree}
Let $G$ be a finite perfect group and let $M$ be a minimal normal elementary abelian subgroup of $G$. Suppose that $\Centralizer_G(M)=M$ and $G/M\cong \Alt_n$ with $n\geq 17$. Then $b(G)>b(\Alt_n)$. \end{lemma}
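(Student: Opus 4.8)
The plan is to exploit the module structure of $M$ as an $\F_p[\Alt_n]$-module together with Clifford theory. Since $M$ is a minimal normal elementary abelian subgroup of $G$ with $\Centralizer_G(M)=M$, we may regard $M$ as a faithful irreducible $\F_p[\Alt_n]$-module $V$ for some prime $p$, and $G$ is a perfect extension of $\Alt_n$ by $V$. The key point is that among the irreducible characters of $G$ lying above nontrivial characters of $M$, some have a degree that is at least $|V|/c$ for a controlled constant $c$, and this number exceeds $b(\Alt_n)$ for $n\geq 17$. Concretely, I would first argue that $\dim V$ cannot be too small: a faithful $\F_p[\Alt_n]$-module has dimension at least $n-2$ (the fully deleted permutation module), so $|M|=|V|=p^{\dim V}\geq 2^{n-2}$.

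Next, for a nontrivial $\lambda\in\Irr(M)$, let $T=I_G(\lambda)$ be its inertia group in $G$. By Clifford theory every $\chi\in\Irr(G|\lambda)$ has degree divisible by $|G:T|=|\Alt_n:\bar T|$, where $\bar T=T/M$ is the stabilizer in $\Alt_n$ of the corresponding point of the dual module $V^*$. Since $V^*$ is also a nontrivial irreducible $\F_p[\Alt_n]$-module, the action of $\Alt_n$ on the nonzero vectors of $V^*$ has all orbits of size at least the minimal index of a proper subgroup giving a nontrivial permutation action; in particular no orbit can be a single point (faithfulness) and by the structure of permutation actions of $\Alt_n$ one expects the relevant orbit sizes to be large, so that $|G:T|$ is large — or else $\bar T$ is large, in which case $\bar T$ itself has a character of large degree that induces up. The cleanest route is: either $|G:T|\geq$ (something comparable to $|V|^{1/2}$), giving $b(G)\geq |G:T|\geq b(\Alt_n)$ directly; or $T$ is large and we count — $|\Irr(G|\lambda)|$ ranges of degrees force $b(G)\geq |T:M|_{\text{relevant}}$, and combining with $|M|\geq 2^{n-2}$ and $b(\Alt_n)\leq \sqrt{n!/2}$ (a standard bound, since $b(G)^2\leq |G|$) yields the inequality once $n\geq 17$. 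I would phrase the decisive estimate as: there exists $\chi\in\Irr(G)$ with $\chi(1)^2\geq |M|\cdot(\text{factor})$, hence $b(G)^2 \geq 2^{n-2}$ roughly, while $b(\Alt_n)^2\leq n!/2$; but since we want the opposite direction of comparison this needs care — the right statement is that $b(G)\geq b(\Alt_n)$ because $G$ has a faithful character built from $V$ of degree exceeding any degree of $\Alt_n$, using that character degrees of $\Alt_n$ are bounded polynomially-ish while $|M|$ is exponential in $n$.

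The main obstacle, and where most of the work goes, will be handling the case where the inertia quotient $\bar T = I_G(\lambda)/M$ is a \emph{large} subgroup of $\Alt_n$ — i.e. the dual module $V^*$ has a short orbit on nonzero vectors. Here one cannot conclude from $|G:T|$ alone, and must instead analyze $\Irr(T|\lambda)$: $\lambda$ extends to $T$ (or to a suitable subgroup), and then $\Irr(T|\lambda)$ is in bijection with $\Irr(\bar T)$, so $b(G)\geq b(\bar T)$; one then needs $b(\bar T)>b(\Alt_n)$, which is false in general, so one must instead multiply by $|G:T|$ and show the \emph{product} $|G:T|\cdot b(\bar T)$ beats $b(\Alt_n)$ — this requires knowing enough about subgroups $\bar T$ of $\Alt_n$ that can arise as stabilizers in an irreducible module, which is exactly the kind of information provided by results on low-dimensional representations (Lemma~\ref{lem:mindeg} and the classification input cited for Step~2). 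The restriction $n\geq 17$ surely enters precisely to make the numerical comparison go through and to allow invoking the Guralnick–Tiep result on the non-coprime $k(GV)$ problem mentioned in Remark~(5), which bounds $k(G)=k(V\rtimes\Alt_n)$ and hence, via $\sum\chi(1)^2=|G|$ with only $k(G)$ terms, forces some $\chi(1)$ to be large. I would therefore structure the proof around that $k(GV)$-type bound: $b(G)^2 \geq (|G| - |\text{small degrees}|)/k(G) \geq |M|\cdot|\Alt_n| / (\text{poly}(n))$, which for $n\geq 17$ exceeds $b(\Alt_n)^2\leq |\Alt_n|$ since $|M|\geq 2^{n-2}$ outgrows the polynomial factor.
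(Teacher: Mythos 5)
Your final paragraph does land on the paper's actual mechanism --- bound $k(G)$ via the non-coprime $k(GV)$ theorem and then use $|G|=\sum_{\chi}\chi(1)^2\le k(G)b(G)^2$ to force a large degree --- but the quantitative form you give is wrong, and the rest of the proposal does not close the argument. Guralnick--Tiep do not give $k(G)\le\mathrm{poly}(n)$ (this is false: already $k(G)\ge k(\Alt_n)$ grows superpolynomially, and the relevant bound is of a completely different shape). What their Theorem~1.4 gives, for $n\ge 17$, is $k(M\rtimes T)\le |M|/2$ for $T\cong\Alt_n$ acting on $M$; and since $G$ need not split over $M$, you also need their Proposition~2.4 to get $k(G)\le k(M\rtimes T)$. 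With the correct bound the factor $|M|$ cancels and no lower bound on $|M|$ is needed at all: assuming $b(G)\le b(\Alt_n)$ one gets
\[
|\Alt_n|\cdot|M|=|G|\le k(G)\,b(G)^2\le \tfrac12 |M|\,b(\Alt_n)^2,
\]
hence $|\Alt_n|\le\tfrac12 b(\Alt_n)^2<b(\Alt_n)^2\le|\Alt_n|$, a contradiction. So your auxiliary claim $|M|\ge 2^{n-2}$ (and the appeal to minimal faithful module dimensions) is superfluous, and your version of the estimate, which needs $|M|$ to beat a polynomial, rests on a bound on $k(G)$ that is not available.

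The first two-thirds of the proposal --- Clifford theory on $\Irr(G|\lambda)$, orbit sizes of $\Alt_n$ on $V^*$, and the inertia-quotient case analysis --- is not a proof and you acknowledge as much: the case of a short orbit (large $\bar T$) is exactly where the approach stalls, since neither $|G:T|$ nor $b(\bar T)$ alone, nor obviously their product, can be shown to exceed $b(\Alt_n)$ without substantial extra input about which stabilizers occur. That route should be dropped; the counting argument above, by contradiction, is self-contained once the two Guralnick--Tiep statements are quoted correctly, and it is precisely what the restriction $n\ge 17$ is for.
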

\begin{proof}
Suppose by contradiction that $b(G)\le b(G/M)$. Form the semidirect product $TM$ with $T\cong G/M\cong\Alt_n$ acting on $M$ as inside $G$. It follows from \cite[Proposition~2.4]{Guralnick} that $k(G)\leq k(TM)$. Since $n\geq 17$,  by \cite[Theorem~1.4]{Guralnick} we have that $k(TM)\le |M|/2$ and hence $k(G)\le k(TM)\le |M|/2$. We have that \[|G|=\sum_{\chi\in\Irr(G)}\chi(1)^2\le k(G)b(G)^2\le \frac{1}{2}|M|b(G/M)^2.\]
Since $|G|=|G/M|\cdot |M|$,  we deduce that \[|G/M|\cdot |M|\le \frac{1}{2}|M|b(G/M)^2.\] After simplifying, we obtain that \[|G/M|\le \frac{1}{2}b(G/M)^2<b(G/M)^2,\] which is impossible. Therefore, $b(G)>b(G/M)$ as wanted.
\end{proof}

We will need the following result whose proof is similar to the proof of Step 3 in \cite[Section~5]{NTW}. So we only give a sketch.

\begin{lemma}\label{small degrees} Let $n\in \{14,15,16\}$. Let $G$ be a finite perfect group and $M$ be a minimal normal elementary abelian subgroup of $G$ such that $G/M\cong\Alt_n$,  and $\Centralizer_G(M)=M$. Then some degree of $G$ divides no degree of $\Alt_n$. \end{lemma}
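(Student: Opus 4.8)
The plan is to mimic the Step 3 argument of \cite[Section~5]{NTW} for the three small cases $n\in\{14,15,16\}$, where the Guralnick--Tiep $k(GV)$ bound (which requires $n\ge 17$) is not available. Since $M$ is a minimal normal elementary abelian subgroup with $\Centralizer_G(M)=M$, it is an irreducible $\F_p[\Alt_n]$-module for some prime $p$, and $G$ is a perfect extension of $\Alt_n$ by this module. First I would reduce to a finite list of candidate modules $M$: $p$ must divide $|\Alt_n|$, and the relevant irreducible $\F_p[\Alt_n]$-modules of the smallest dimensions are explicitly known (via the decomposition matrices of $\Alt_n$ for $n=14,15,16$, or equivalently from the modular ATLAS). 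For each such $M$, one computes $\dim_{\F_p}M$, so that $|M|=p^{\dim M}$; the degrees of $G$ lying in $\Irr(G\mid M)$ for nontrivial $\theta\in\Irr(M)$ have the form (orbit length)$\cdot$(degree of a projective-type character of a point stabilizer), and in particular $|G:I_G(\theta)|$ divides every such degree, where the orbit length $|G:I_G(\theta)|$ is the size of an $\Alt_n$-orbit on $M\setminus\{0\}$ acting on the dual.

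The key steps, in order: (1) enumerate the admissible primes $p$ and the irreducible $\F_p[\Alt_n]$-modules $M$ with $\Centralizer$ condition, recording $|M|$ and the permutation character of $\Alt_n$ on $M\setminus\{0\}$; (2) for each orbit of $\Alt_n$ on the nonzero vectors of the dual module, identify the point stabilizer $H=I_G(\theta)/M\le \Alt_n$ and note that $\Irr(G\mid\theta)$ contains characters of degree $|\Alt_n:H|\cdot\psi(1)$ where $\psi$ runs over the irreducible characters of a suitable (possibly projective) representation group of $H$ extending $\theta$; in particular $|\Alt_n:H|\cdot 1 = |\Alt_n:H|$ divides a degree of $G$; (3) show that this forces some concrete integer $d$ to be a character degree of $G$, hence of $\Alt_n$, and then derive a contradiction by checking — using \cite{GAP} — that $d$ divides no element of $\cd(\Alt_n)$, or more strongly that the full multiset of forced degrees cannot be contained in $\cd(\Alt_n)$. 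When the smallest forced degree already divides no degree of $\Alt_n$ we are done; otherwise one uses the largest-degree estimate $|G|=\sum_{\chi}\chi(1)^2$ together with $k(G)\le k(\Alt_n\ltimes M)$ (via \cite[Proposition~2.4]{Guralnick}) and an explicit computation of $k(\Alt_n\ltimes M)$ in \cite{GAP} to bound $b(G)$ and again reach a contradiction, exactly as in \cite{NTW}. Since $n$ is bounded, all the modules, orbit counts, stabilizers and class numbers are finite explicit data that \cite{GAP} handles directly.

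The main obstacle I expect is step (2): controlling the degrees in $\Irr(G\mid\theta)$ when the stabilizer $I_G(\theta)$ does not split over $M$, so that $\theta$ need not extend and one must work with a character triple / projective representations of the point stabilizer $H\le\Alt_n$ over a Schur multiplier that may be nontrivial. This is handled by replacing $H$ by an appropriate central extension (a representation group relative to the relevant cocycle) and using Clifford theory as in \cite[Chapter~11]{Isaacs}; in practice, since $p$ is small and $n\le 16$, the stabilizers arising are small enough that the possible degrees can be listed explicitly and fed into the divisibility check against $\cd(\Alt_n)$. The remaining work is purely computational and is precisely the content referred to by ``the proof is similar to the proof of Step 3 in \cite[Section~5]{NTW}'', so I would present only the reduction to modules and the list of forced degrees, and defer the routine \cite{GAP} verification.
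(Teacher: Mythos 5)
Your proposal takes a genuinely different route from the paper, and its very first step has a real gap. You propose to reduce to a finite explicit list of modules $M$ by asserting that the characteristic $p$ of $M$ divides $|\Alt_n|$ and that only irreducible $\F_p[\Alt_n]$-modules ``of the smallest dimensions'' need to be considered. Neither claim follows from the hypotheses of the lemma. Nothing forces $p$ to divide $|\Alt_n|$: take any prime $p>n$ and let $M$ be the reduction mod $p$ of the irreducible module of dimension $n-1$; then $G=M\rtimes \Alt_n$ is perfect (since $[M,\Alt_n]=M$ and $\Alt_n$ is perfect), $M$ is minimal normal elementary abelian, and $\Centralizer_G(M)=M$, so all hypotheses hold while $p\nmid|\Alt_n|$ and your enumeration never sees this case. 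Likewise, even for $p\mid|\Alt_n|$ the module $M$ can be any faithful irreducible $\F_p[\Alt_n]$-module, of possibly large dimension, so the restriction to small-dimensional modules is not legitimate; and since your whole argument proceeds module by module (orbits on the dual, stabilizers, $k(\Alt_n\ltimes M)$ computed in GAP), this is not a cosmetic omission -- for $n=16$ and a module with $2^{d}$ vectors of large $d$ the promised ``routine'' orbit and class-number computations are also far from routine. A further small slip: a forced degree $d$ of $G$ need not lie in $\cd(\Alt_n)$ under the contradiction hypothesis, only divide some element of it (you do perform the correct divisibility check afterwards, but the phrase ``hence of $\Alt_n$'' is wrong).

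The paper's proof closes exactly the gap you leave open, by never identifying $M$ or $p$ at all. Assuming for contradiction that every degree of $G$ divides some degree of $\Alt_n$, it first produces a nontrivial $\theta\in\Irr(M)$ that is not $G$-invariant (using \cite[Lemma~6]{Hupp} and $\Centralizer_G(M)=M$), places $I_G(\theta)/M$ inside a maximal subgroup $U/M$ of $G/M\cong\Alt_n$, and observes via the Clifford correspondence that $|G:U|$ must divide a degree of $\Alt_n$; this confines $U/M$ to the short lists in Tables~\ref{A14}--\ref{A16}. It then descends through chains $I_G(\theta)\le R\le U$, using the known character degrees of $\Alt_m$ and of the double cover $2\cdot\Alt_m$ (character-triple isomorphisms handle the non-extendible case, as you anticipated) together with Lemma~\ref{lemmaMoreto} (if all degrees over $\theta$ are powers of a fixed prime then the inertia quotient is solvable) to reach a contradiction in every branch, uniformly in $p$ and in the isomorphism type of $M$. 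Your correct observations -- that $|G:I_G(\theta)|$ divides every degree in $\Irr(G\mid\theta)$ and that projective degrees of the stabilizer control the ramification -- are precisely the ingredients the paper uses, but they must be combined with this subgroup-index analysis of $\Alt_n$ rather than with a classification of the modules for the argument to be complete.
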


\begin{proof}
Suppose by contradiction that every degree of $G$ divides some degree of $\Alt_n$. Let $1_M\neq \theta\in\Irr(M)$. We claim that $\theta$ is $G$-invariant. By \cite[Lemma~6]{Hupp}, $[M,G]=1$,  which implies that $\Centralizer_G(M)=G$,  a contradiction.

By way of contradiction, assume that $1_M\neq \theta\in\Irr(M)$ is not $G$-invariant and let $I=I_G(\theta)$. Let $U$ be a subgroup of $G$ such that $I/M\le U/M$ and $U/M$ is a maximal subgroup of $G/M\cong\Alt_n$. Let $t=|U:I|=|U/M:I/M|$ and write \[\theta^I=\sum_{i=1}^\ell e_i\phi_i,\text{~where $\phi_i\in\Irr(I|\theta)$}.\]

By Clifford Correspondence, for each $i$,  we have $$\phi_i^G(1)=|G:U|\cdot |U:I|\phi_i(1)=t|G:U|\phi_i(1)\in \cd(G)$$ and thus it  divides some degree of $\Alt_n$.  Let $\mathcal{A}$ be the set consisting of all the numbers $\chi(1)/|G:U|$ where $\chi\in\Irr(\Alt_n)$ with $|G:U|\mid \chi(1)$. Then $t\phi_i(1)$ divides some number in $\mathcal{A}$ for each $i$. Furthermore, as the index $|G:U|$ divides some degree of $\Alt_n$,  the possibilities for $U/M$ are given in Tables \ref{A14} - \ref{A16}. From these lists, $U/M$ is isomorphic to $(\Sym_5\wr\Sym_3)\cap \Alt_{15}$,
$U/M\cong (\Alt_{m}\times \Alt_k):2,k+m=n,m> k$ or $(\Alt_m\times \Alt_m):2^2$ with $n=2m$.

\medskip
{\bf(1)} $U/M\cong (\Alt_{m}\times \Alt_k):2,k+m=n,m> k$. Let $L/M\cong\Alt_{m}$.  Then $L\unlhd U$ so $L\cap I\unlhd I$.

\smallskip
{\bf (1a)} Assume that $L\le I$. Since $L\unlhd U$,  we have $L\unlhd I$. Hence if $\lambda\in\Irr(L|\theta)$,  then $\lambda(1)$ divides some $\phi_j(1)$ for some $j$,  so $\lambda(1)$ divides some number in $\mathcal{A}$. If $\theta$ extends to $\theta_0\in L$,  then $\theta_0\mu\in\Irr(L|\theta)$ for all $\mu\in\Irr(L/M)$. If $\theta$ does not extend to $I$,  then the set of ramification indices $\{f_j\}_{j=1}^\ell$,  where $\theta^L=f_1\mu_1+\cdots+f_s\mu_s$,  where $\mu_i\in\Irr(L|\theta)$ coincides with the set of the degrees of all faithful irreducible characters of the Schur cover $2\cdot A_m$ by the theory of character triple isomorphisms. In both cases, if one can find $\lambda\in\Irr(L|\theta)$ with $\lambda(1)$ divides no element in $\mathcal{A}$,  then we are done.

\smallskip
{\bf (1b)} Assume that $L\nleq I$. Then $I\leq IL\leq U$ so $|L:I\cap L|=|IL:I|$ divides $t=|U:I|$. Since $L\nleq I$,  $L\cap I\lneq L$ so $L\cap I\leq R\leq L$,  where $R/M$ is maximal in $L/M$ whose index $|L:R|$ divides some number in $\mathcal{A}$. As maximal subgroups of $L/M\cong \Alt_m$ are known, we can obtain a list of such maximal subgroups $R/M$ of $L/M$. Then $|R:I\cap L|\phi_i(1)$ divides some number in $\mathcal{B}$ with $$\mathcal{B}=\{\frac{a}{|L:R|}:|L:R|\mid a\in\mathcal{A}\}.$$  Let $R_1\unlhd R$ such that $R_1/M$ is a nonabelian simple group (if exists). We now repeat the process as above again.

Assume that $R_1\le I\cap L$. Applying the same argument as in case $(1a)$,  we will eventually obtain a contradiction.

Assume that $R_1\nleq I\cap L$ and let $I\cap L\le T\le R_1$ be such that $T/M$ is maximal in $R_1/M$. Then $|T:R_1\cap I|\phi_i(1)$ divides one of the number in $\mathcal{C}$ with $$\mathcal{C}=\{\frac{b}{|R_1:T|}:|R_1:T|\mid b\in\mathcal{B}\}.$$
Repeat the process until we obtain a contradiction by using Lemma \ref{lemmaMoreto}.

\medskip
{\bf(2)} $U/M\cong (\Alt_{m}\times \Alt_m):2^2,n=2m$.
Let $L/M=L_1/M\times L_2/M$,  where $L_i\unlhd L$ and $L_i/M\cong\Alt_m$.  Then $L\unlhd U$ so $L\cap I\unlhd I$. The maximal subgroups of $L/M$ are known. In fact, every maximal subgroup of $L/M$ is either the diagonal subgroup generated by $(a,a)$ with $a\in\Alt_m$ or has the form $L_1/M\times K_2$ or $K_1\times L_2/M$,  where $K_i$ is maximal in $L_i/M$.
If $L\nleq I$,  then we can argue as in (1b) above. If $L\leq I$,  then $M\unlhd L_1\unlhd L\unlhd I$. If $\lambda\in\Irr(L_1|\theta)$,  then $\lambda(1)$ must divide $\phi_j(1)$ for some $j$, by the transitivity of character induction. From this, one can get a contradiction by finding $\lambda \in \Irr(L_1|\theta)$ of large degree.

\medskip
{\bf(3)} $U/M\cong (\Sym_5\wr\Sym_3)\cap \Alt_{15}$. This case only occurs 
when $n=15$. For this case, we have that $|U:I|\phi_i(1)=1$ for all $i$,  which implies that $I/M=U/M$ is nonsolvable and $\phi_i(1)=1$ for all $i$. The latter implies that $I/M$ is abelian, which is impossible.

\medskip
 We demonstrate this strategy by giving a detailed proof for the case $n=14$. The remaining cases can be dealt with  similarly.

\begin{table}[ht]\caption{Maximal subgroups of small index of $\Alt_{14}$ }\label{A14}
\begin{tabular}{l|l}\hline

Subgroup Structure&Index\\\hline
$\Alt_{13}$ & $14$\\
 $\Sym_{12}$&$91$\\
 $(\Alt_{11}\times \Z_3):2$&$364$\\
  $(\Alt_{10}\times \Alt_4):2$&$1001$\\
   $(\Alt_9\times \Alt_5):2$&$2002$\\
   $(\Alt_8\times \Alt_6):2$&$3003$\\
   $(\Alt_7\times \Alt_7):2^2$&$1716$\\\hline

\end{tabular}
\end{table}

\begin{table}[ht]\caption{Maximal subgroups of small index of $\Alt_{15}$ }\label{A15}
\begin{tabular}{l|l}\hline
Subgroup Structure&Index\\\hline
$\Alt_{14}$ & $15$\\
 $\Sym_{13}$&$105$\\
$(\Alt_{12}\times \Z_3):2$&$455$\\
 $(\Alt_{11}\times \Alt_4):2$&$1365$\\
  $(\Alt_{10}\times \Alt_5):2$&$3003$\\
   $(\Alt_9\times \Alt_6):2$&$5005$\\
   $(\Alt_8\times \Alt_7):2$&$6435$\\
   $(\Sym_5\wr\Sym_3)\cap \Alt_{15}$&$126126$\\\hline

\end{tabular}
\end{table}

\begin{table}[ht]\caption{Maximal subgroups of small index of $\Alt_{16}$ }\label{A16}
\begin{tabular}{l|l}\hline

Subgroup Structure&Index\\\hline
$\Alt_{15}$ & $16$\\
 $\Sym_{14}$&$120$\\
$(\Alt_{13}\times \Z_3):2$&$560$\\
 $(\Alt_{12}\times \Alt_4):2$&$1820$\\
  $(\Alt_{11}\times \Alt_5):2$&$4368$\\
   $(\Alt_{10}\times \Alt_6):2$&$8008$\\
   $(\Alt_9\times \Alt_7):2$&$11440$\\
   $(\Alt_8\times \Alt_8):2^2$&$6435$\\\hline

\end{tabular}
\end{table}

From Table \ref{A14}, we consider the following cases:

\medskip
{\bf Case 1:} $U/M\cong \Alt_{13}$. We have that $\mathcal{A}$ consists of the following numbers:

\begin{center}\begin{tabular}{c}40, 143, 312, 352, 429, 546, 858, 975,1001,\\ 1144,1456, 1664, 2002, 3003,
  3432, 3575, 4576
\end{tabular}
\end{center}
Assume first that $t=1$. Then $I/M\cong \Alt_{13}$. If $\theta$ extends to $\theta_0\in\Irr(I)$,  then by Gallagher's Theorem, $\theta_0\tau\in\Irr(I|\theta)$ for all $\tau\in\Irr(I/M)$. Choose $\tau\in\Irr(\Alt_{13})$ with $\tau(1)=21450$,  we obtain a contradiction as $\theta_0(1)\tau(1)$ divides no number in $\mathcal{A}$. Similarly, if $\theta$ is not extendible to $I$,  then one can find $\gamma\in\Irr(I|\theta)$ with $\gamma(1)=20800$ and $\gamma(1)$ divides no elements in $\mathcal{A}$. Notice that in the latter case $20800$ is the degree of a faithful irreducible character of $2\cdot \Alt_{13}$.
Assume that $t>1$. Then $I\leq R\le U$ and $R/M$ is maximal in $U/M$. Since $|U:R|$ divides some number in $\mathcal{A}$,  the possibilities for $R/M$ are given in Table \ref{A14A13}.

\begin{table}[ht]\caption{Maximal subgroups of small index of $\Alt_{13}$ }\label{A14A13}
\begin{tabular}{l|l}\hline
Subgroup Structure&Index\\\hline
$\Alt_{12}$ & $13$\\
 $\Sym_{11}$&$78$\\
  $(\Alt_{10}\times \Z_3):2$&$286$\\
   $(\Alt_{9}\times \Alt_4):2$&$715$\\
   $(\Alt_7\times \Alt_6):2$&$1716$\\\hline
   \end{tabular}
\end{table}

{\bf (1a)}  $R/M\cong \Alt_{12}$. As $|U:R|=13\mid t$,  $|R:I|\phi_i(1)$ divides one of the numbers in $\mathcal{B}$ with
\[\mathcal{B}=\{11, 24, 33, 42, 66, 75, 77, 88, 112, 128, 154, 231, 264, 275, 352\}.\]

(i) Assume $I=R$. Whether $\theta$ extends to $R$ or not, we can find $\lambda\in\Irr(I|\theta)$ such that $\lambda(1)$ does not divide any number above, a contradiction. Indeed, one can choose $\lambda(1)=5775$ if $\theta$ is extendible to $I$ and $\lambda(1)=7776$ if $\theta$ is not extendible.

(ii) Assume that $I\lneq R$. Then $I\leq J\leq R$ where $J/M$ is maximal in $R/M$. As the maximal index $|R:J|$ divides one of the number in $\mathcal{B}$,  $J/M\cong \Sym_{10}$ or $\Alt_{11}$. If the first case holds, then $|J:I|\phi_i(1)$ divides $4$ and if the latter case holds, then $|J:I|\phi_i(1)$ divides $22$. Assume that the former case holds. Investigating the maximal subgroups of $\Sym_{10}$,  as $|J:I|\mid 4$,  we deduce that $|J:I|=1$ or $2$ so $I/M\cong \Sym_{10}$ or $\Alt_{10}$,  in particular, $I/M$ is nonsolvable. However, as $\phi_i(1)\mid 4$ for all $i$,  each $\phi_i(1)$ is a power of $2$. By Lemma \ref{lemmaMoreto}, $I/M$ is solvable, which is a contradiction. So $J/M\cong \Alt_{11}$.  Again, as $|J:I|\mid 22$,  $|J:I|=1$ or $11$. In both cases, $I/M$ is nonsolvable. Now if $I\neq J$,  then $|J:I|=11$ and $\phi_i(1)\mid 2$ for all $i$ and $I/M\cong \Alt_{10}$,  we obtain a contradiction as above. So, $I/M\cong \Alt_{11}$. This case also leads to a contradiction as we can always find $\lambda\in\Irr(I|\theta)$ with $\lambda(1)>22$.

\smallskip

{\bf (1b)}  $R/M\cong \Sym_{11}$. Since $|U:R|=78$,  for each $i$,  $|R:I|\phi_i(1)$ divides $7$ or $44$. Let $M\unlhd R_1\unlhd R$ be such that $R_1/M\cong \Alt_{11}$.

Assume that $R_1\leq I$. Then $R_1\unlhd I$ and so for each $\lambda\in\Irr(R_1|\theta)$,  $\lambda(1)$ divides some $\phi_j(1)$ and so divides $7$ or $44$. Since $R_1/M\cong \Alt_{11}$,  one can choose $\lambda\in\Irr(R_1|\theta)$ with $\lambda(1)>44$. So, assume that $R_1\nleq I$.
Since $R_1\unlhd R$,  we have $I\lneq IR_1\le R$. Thus $|R:I|$ is divisible by $|IR_1:I|=|R_1:I\cap R_1|$. As $I\cap R_1\lneq R_1$ and $R_1/M\cong \Alt_{11}$,  $|R_1:I\cap R_1|$ and so $|R:I|$ is divisible by the index of some maximal subgroup of $\Alt_{11}$. So some maximal index of $\Alt_{11}$ divides $7$ or $44$,  which implies that $11\mid |R:I|$ and hence $\phi_i(1)\mid 4$ for all $i$. Let $M\unlhd K\leq R_1$ such that $K/M\cong \Alt_{10}$. Then $I\cap R_1\leq K$ and $|K:I\cap R_1|\mid 4$.  As the smallest index of $\Alt_{10}$ is $10$,  $K=I\cap R_1$,  so $I/M$ is nonsolvable. But then this contradicts Lemma \ref{lemmaMoreto} as all $\phi_i(1)'s$ are $2$-powers.

\smallskip
{\bf(1c)}  $R/M\cong (\Alt_{10}\times \Z_3):2$. As $|U:R|=286$,  $|R:I|\phi_i(1)$ divides $7,12$ or $16$. Let $M\unlhd R_1\unlhd R$ be such that $R_1/M\cong \Alt_{10}$. If $R_1\leq I$,  then by considering the character degree sets of $\Alt_{10}$ and $2\cdot \Alt_{10}$,  we obtain a contradiction as in the previous case.
So, assume that $R_1\nleq I$. As in the proof of the previous case, $|IR_1:I|=|R_1:R_1\cap I|$ is divisible by the index of some maximal subgroup of $\Alt_{10}$,  and so $\Alt_{10}$ has some maximal subgroup whose index divides $7,12$ or $16$,  which is impossible.

\smallskip
{\bf (1d)}  $R/M\cong (\Alt_{9}\times \Alt_4):2$ or $(\Alt_{7}\times \Alt_6):2$ As $|U:R|=715$ or $1716$,  $|R:I|\phi_i(1)$ divides $5$ or $2$,  respectively. It follows that for each $i$,  $\phi_i(1)$ is a power of a fixed prime and $I/M$ is nonsolvable, contradicting Lemma \ref{lemmaMoreto}.

\medskip
{\bf Case 2:} $U/M\cong \Sym_{12}$. Let $M\unlhd L\unlhd U$ be such that $L/M\cong \Alt_{12}$. In this case, the largest element in $\mathcal{A}$ is $704$.
If $L\leq I$,  then one can find $\lambda\in\Irr(L|\theta)$ with $\lambda(1)=5775$ or $7776$ according to whether $\theta$ extends to $L$ or not and we get a contradiction as $\lambda(1)>704$,  the largest number in $\mathcal{A}$. So, we assume that $L\nleq I$.  Then $|U:I|$ is divisible by $|IL:I|=|L:L\cap I|$ with $L\cap I\lneq L$. Let $M\unlhd R\leq L$ be such that $R/M$ is maximal in $L/M$ and $I\cap L\le R$. It follows that the maximal index $|L:R|$ divides some number in $\mathcal{A}$. Then one of the following cases holds.

\smallskip
(i) $R/M\cong \Alt_{11}$. Then $|L:R|=12$ and $|R:I\cap L|\phi_i(1)$ divides $7$ or $44$.
Assume that $R=I\cap L$. Since $L\unlhd U$,  $R=I\cap L\unlhd I$. So, for every $\lambda\in\Irr(R|\theta)$,  $\lambda(1)$ divides some $\phi_j(1)$ for some $j$,  and hence divides $7$ or $44$. However, this is impossible as $R/M\cong \Alt_{11}$.

Assume that $I\cap L\lneq R$. Let $M\unlhd T\leq R$ be such that $I\cap L\le T$ and $T/M$ is maximal in $R/M$. Then $|R:T|$ divides $7$ or $44$ which implies that $T/M\cong \Alt_{10}$ and $|R:T|=11$. Hence $|T:I\cap L|\phi_i(1)\mid 4$ for all $i$. This implies that all $\phi_i(1)$ are powers of $2$ and $I/M$ is nonsolvable, a contradiction.

\smallskip
(ii) $R/M\cong \Sym_{10}$. Then $|L:R|=66$ and $|R:I\cap L|\phi_i(1)$ divides $7$ or $8$. Let $R_1\unlhd R$ such that $R_1/M\cong \Alt_{10}$. We can check that $R_1\leq I$ as the smallest index of $\Alt_{10}$ is $10$ which is larger than $8$. But then one can find $\lambda\in\Irr(R_1|\theta)$ such that $\lambda(1)>8$.

\smallskip
(iii) $R/M\cong (\Alt_6\times \Alt_6):2^2$. Then $|L:R|=462$ and $|R:I\cap L|\phi_i(1)=1$. This case obviously cannot happen as $I/M$ contains $R/M$ which is nonsolvable and all $\phi_i(1)=1$.

\begin{table}[ht]\caption{Maximal subgroups of small index of $\Sym_{12}$}\label{A14S12}
\begin{tabular}{l|l}\hline
Subgroup Structure&Index\\\hline
$\Alt_{12}$ & $2$\\
 $\Sym_{10}\times \Sym_2$&$66$\\
  $\Sym_{11}$&$12$\\
   $\Sym_6\wr \Sym_2$&$462$\\\hline
   \end{tabular}
\end{table}

\medskip
{\bf Case 3:} $U/M\cong (\Alt_{11}\times \Z_3):2$. We have $L/M\cong\Alt_{11}$ and $|U:I|\phi_i(1)$ divides one of the numbers
\[12, 21, 33, 44, 56, 64, 77, 132, 176.\]
Arguing as before, we can assume that $L\nleq I$. So $I\lneq IL\le U$ and $|U:I|$ is divisible by $|L:R|$ with $I\cap L\leq R\leq L$ and $R/M$ maximal in $L/M$. It follows that $R/M\cong \Alt_{10}$ and $|R:I\cap L|\phi_i(1)$ divides $7,12$ or $16$. Observe that $I\cap L\neq R$ as $\Irr(R|\theta)$ possesses irreducible character of degree strictly larger than $16$. Thus $I\cap L\le K\le R$ with $K/M$ maximal in $R/M\cong \Alt_{10}$ and $|R:K|$ divides $7,12$ or $16$. However, this is impossible by investigating the maximal subgroups of $\Alt_{10}$.

\medskip
{\bf Case 4:} $U/M\cong (\Alt_{10}\times \Alt_4):2$. The largest element in $\mathcal{A}$ is $64$. Let $L/M\cong \Alt_{10}$. As above, we deduce that $L\nleq I$ and if $I\cap L\le R\le L$ with $R/M$ maximal in $L/M$,  then $R/M\cong\Alt_9$ and $|R:I\cap L|\phi_i(1)\mid 5$. Clearly, this case cannot happen.

\medskip
{\bf Case 5:} $U/M\cong (\Alt_{9}\times \Alt_5):2$. Then $$\mathcal{A}=\{ 1, 3, 6, 7, 8, 14, 21, 24, 25, 32 \}.$$  Let $L/M\cong \Alt_{9}$. As above, we have $L\nleq I$ but no maximal index of $L/M$ divides a number in $\mathcal{A}$.

\medskip
{\bf Case 6:} $U/M\cong (\Alt_{8}\times \Alt_6):2$. Let $L/M\cong\Alt_8$. We have \[\mathcal{A}=\{2, 3, 4, 5, 7, 9, 14, 16, 21\}.\]
So, $L\nleq I$ and if $I\cap L\leq R\leq L$ with $R/M$ is maximal in $L/M$,  then $R/M\cong \Alt_7$ and $|R:I\cap L|\phi_i(1)\mid 2$ for all $i$.

\medskip
{\bf Case 7:} $U/M\cong (\Alt_{7}\times \Alt_7):2^2$. Let $L/M\cong \Alt_7\times \Alt_7$. We have \[\mathcal{A}=\{ 1, 7, 9, 20, 28\}.\] Every maximal subgroup of $L/M=L_1/M\times L_2/M,L_i/M\cong\Alt_7$,  is either the diagonal subgroup generated by $(a,a)$ with $a\in\Alt_7$ or has the form $L_1/M\times K_2$ or $K_1\times L_2/M$,  where $K_i$ is maximal in $L_i/M$. If $L\nleq I$,  then $I\cap L\le R\le L$ with $R/M\cong A_6\times A_7$ or $A_7\times A_6$ and $|R:I\cap L|\phi_i(1)\mid 4$. Clearly, this case cannot happen. Thus $L\unlhd  I\unlhd U$ with $|U:L|=4$. Since $L_1$ is subnormal in $I$ and there exists $\lambda\in\Irr(L_1|\theta)$ with $\lambda(1)>28$,  so one can find $j$ such that $\phi_j(1)>28$,  a contradiction.
\end{proof}

\begin{theorem}\label{th:largedegree}
Let $n\in\N,n\geq 14$. Let $G$ be a finite perfect group and $M$ be a minimal normal elementary abelian subgroup of $G$ such that $G/M\cong\Alt_n$,  and $\Centralizer_G(M)=M$. Then some degree of~$G$ divides no degree of~$\Alt_n$. \end{theorem}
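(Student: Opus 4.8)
The plan is to obtain Theorem~\ref{th:largedegree} as a short assembly of the two lemmas just proved, splitting on the size of~$n$. Suppose first that $n\geq 17$. Then the hypotheses of Lemma~\ref{lem:large degree} are precisely those of the theorem, so $b(G)>b(\Alt_n)$. Now any positive divisor of a positive integer $m$ is at most $m$; hence for every $e\in\cd(\Alt_n)$ we have $e\leq b(\Alt_n)<b(G)$, and so $b(G)\nmid e$. Since $b(G)\in\cd(G)$, this already exhibits a degree of $G$ that divides no degree of $\Alt_n$, which is what we want.

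For the remaining values $n\in\{14,15,16\}$ the statement is literally the content of Lemma~\ref{small degrees}, so nothing further is needed there. Combining the two cases yields the theorem for all $n\geq 14$.

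In other words, all the genuine work has already been carried out in the two lemmas, and the main obstacle is behind us rather than ahead: Lemma~\ref{small degrees} demands the delicate case-by-case descent through maximal subgroups of $\Alt_n$ (and of the intermediate almost simple sections), using the maximal-subgroup tables together with Lemma~\ref{lemmaMoreto} to derive a contradiction from the assumption that every degree of $G$ divides some degree of $\Alt_n$; and Lemma~\ref{lem:large degree} depends on the Guralnick--Tiep bound $k(GV)\leq |V|/2$ from the non-coprime $k(GV)$ problem, which is exactly what confines that argument to $n\geq 17$ and forces the separate treatment of $n\in\{14,15,16\}$. Granting these, the deduction of Theorem~\ref{th:largedegree} is immediate, as sketched above.
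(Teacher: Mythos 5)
Your proposal is correct and matches the paper's own proof exactly: for $n\geq 17$ it invokes Lemma~\ref{lem:large degree} to get $b(G)>b(\Alt_n)$ (so $b(G)$ divides no degree of $\Alt_n$), and for $n\in\{14,15,16\}$ it cites Lemma~\ref{small degrees}. Nothing is missing.
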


\begin{proof}
Clearly, if $14\le n\le 16$,  then the theorem follows from Lemma \ref{small degrees}. Now assume that $n\geq 17$. By Lemma \ref{lem:large degree}, the largest degree of~$G$ is strictly larger than~$b(\Alt_n)$,  so this degree divides no degree of~$\Alt_n$ as wanted.
\end{proof}
\section{Proof of the main theorems}\label{sec:HCproof}
We now prove our main results. In the first theorem, we obtain the structure of the finite groups $G$ under the assumption that $\cd(G)=\cd(\Alt_n)$ with $n\ge 14$ using the results we have proven so far.  Our main theorem will follow by combining this with the result due to Debaene \cite{Debaene}.

\begin{theorem}\label{thm:almostHC}
Let $n\in \N$, $n\geq 14$.
Let $G$ be a finite group such that
$\cd(G)=\cd( \Alt_n)$.
Then $G$ has a normal abelian subgroup $A$ such that one of the following holds:
\begin{enumerate}
\item[$(i)$] $G\cong \Alt_n\times A$  (so Huppert's Conjecture is confirmed).
\item[$(ii)$] $G\cong (\Alt_n\times A)\cdot 2$ and $G/A\cong \Sym_n$.
\end{enumerate}
\end{theorem}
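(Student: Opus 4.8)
The plan is to assemble Theorem~\ref{thm:almostHC} from the three ingredients already in place --- nonsolvability (Theorem~\ref{thm:nonsolvable}), the identification of nonabelian chief factors (Theorem~\ref{th:composition}), and the large-degree estimate for perfect extensions (Theorem~\ref{th:largedegree}) --- glued together by elementary Clifford theory. Assume $\cd(G)=\cd(\Alt_n)$ with $n\ge 14$ and let $M$ be the solvable radical of $G$. By Theorem~\ref{thm:nonsolvable}, $G$ is nonsolvable, so $M\neq G$; since $G/M$ has trivial solvable radical, its socle is a direct product of nonabelian simple groups, each of which is a chief factor of $G$ and hence isomorphic to $\Alt_n$ by Theorem~\ref{th:composition}. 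A standard argument (analogous to Claim~1 in the proof of Theorem~\ref{th:composition}: two such factors would force an irreducible character of $G/M$ of degree divisible by $(n-1)^2$, contrary to Lemmas~\ref{lem:mindeg} and~\ref{lem:excl2}) shows there is exactly one, so $\mathrm{soc}(G/M)\cong\Alt_n$. As $\Centralizer_{G/M}(\mathrm{soc}(G/M))$ is a normal subgroup of $G/M$ containing no minimal normal subgroup, it is trivial, so $G/M$ embeds in $\Aut(\Alt_n)=\Sym_n$ with socle $\Alt_n$; hence $G/M\cong\Alt_n$ or $G/M\cong\Sym_n$.

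Next I would extract a perfect normal subgroup realizing $\Alt_n$ exactly. Let $K\unlhd G$ be the preimage of $\mathrm{soc}(G/M)$, so $K/M\cong\Alt_n$, and set $L:=K^{(\infty)}$, the last term of the derived series of $K$; then $L$ is perfect, characteristic in $K$, hence normal in $G$, and $LM=K$ since $\Alt_n$ is perfect. Put $N:=L\cap M$, a solvable subgroup normal in $L$ and in $G$ with $L/N\cong\Alt_n$. The heart of the proof is the claim $N=1$. Suppose not, and choose $W\unlhd L$ maximal subject to $W<N$; then $V:=N/W$ is an elementary abelian $p$-group which is a minimal normal subgroup of the perfect group $L/W$, and $(L/W)/V\cong\Alt_n$. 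Since $V\le C:=\Centralizer_{L/W}(V)\unlhd L/W$ and $C/V$ is normal in the simple group $(L/W)/V\cong\Alt_n$, either $C=V$ or $C=L/W$. If $C=V$, then Theorem~\ref{th:largedegree} applies to $L/W$ and produces $\chi\in\Irr(L/W)$ whose degree divides no degree of $\Alt_n$. If $C=L/W$, then $V\le\Center(L/W)$, so $L/W$ is a perfect central extension of $\Alt_n$; as the Schur multiplier of $\Alt_n$ is cyclic of order $2$ for $n\ge 14$ and $V\neq 1$, we get $L/W\cong 2\cdot\Alt_n$, which likewise carries a spin irreducible character whose degree divides no degree of $\Alt_n$ (via the degree theory of the double cover and the classification of prime-power-degree characters of covers of alternating and symmetric groups \cite{BBOO,Bessenrodt}; equivalently $b(2\cdot\Alt_n)>b(\Alt_n)$). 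In either case, inflating the offending character to $L$ and using that $L\unlhd G$, so that every irreducible degree of $L$ divides some degree of $G$ by \cite[Lemma~6.8]{Isaacs}, yields an element of $\cd(G)=\cd(\Alt_n)$ divisible by a number dividing no degree of $\Alt_n$ --- absurd. Hence $N=1$ and $L\cong\Alt_n$.

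Finally I would assemble the two conclusions. Now $L\cong\Alt_n$ is normal in $G$ with $L\cap M=1$, so $[L,M]\le L\cap M=1$, whence $LM=L\times M\cong\Alt_n\times M$ with $LM/M=\mathrm{soc}(G/M)$. If $G/M\cong\Alt_n$ then $LM=G$; if $G/M\cong\Sym_n$ then $LM\unlhd G$ has index $2$ and $G/M\cong\Sym_n$. In either case $LM\unlhd G$, so every degree of $LM$ --- that is, every product $d\cdot e$ with $d\in\cd(\Alt_n)$ and $e\in\cd(M)$ --- divides some element of $\cd(G)=\cd(\Alt_n)$; taking $d=b(\Alt_n)$ forces $e\le 1$, so $\cd(M)=\{1\}$ and $M$ is abelian. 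Setting $A:=M$ gives $G\cong\Alt_n\times A$ in case $(i)$, and $G\cong(\Alt_n\times A)\cdot 2$ with $G/A\cong\Sym_n$ in case $(ii)$.

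The main obstacle is the claim $N=1$: one must run the Clifford-theoretic descent so as to land precisely in the hypotheses of Theorem~\ref{th:largedegree} --- a \emph{perfect} group with a \emph{self-centralizing} minimal normal elementary abelian subgroup and quotient $\Alt_n$ --- and, crucially, one must separately dispose of the central-extension alternative $L/W\cong 2\cdot\Alt_n$ that is not covered by Theorem~\ref{th:largedegree} and that requires the degree theory of $2\cdot\Alt_n$. By contrast, the reduction to $G/M$ almost simple and the final splitting of $M$ as an abelian direct factor are routine bookkeeping.
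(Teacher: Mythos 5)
Your outline agrees with the paper up to the decisive point, but there is a genuine gap exactly where the paper spends most of its effort: the central case $V\le \Center(L/W)$, $L/W\cong 2\cdot\Alt_n$. You dispose of it by asserting that $2\cdot\Alt_n$ has a (spin) irreducible character whose degree divides no degree of $\Alt_n$, "equivalently $b(2\cdot\Alt_n)>b(\Alt_n)$", citing \cite{BBOO,Bessenrodt}. Neither reference proves anything of the sort: those papers classify \emph{prime power} degree representations, and the inequality $b(2\cdot\Alt_n)>b(\Alt_n)$ is not established there (nor is it obvious; a crude counting of spin characters against $b(\Alt_n)^2$ does not settle it). The natural explicit spin character to use, the basic spin character of degree $2^{\lfloor (n-2)/2\rfloor}$, is far smaller than the $2$-part of $|\Alt_n|$ and may well divide some degree of $\Alt_n$, so it does not give the contradiction inside $L/W$ alone. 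This is precisely why the paper does not argue inside $L/U$: it first shows $U\unlhd G$ (a separate step via the core of $U$ and the chief factor $V/K$), then uses the Three Subgroups Lemma to get $[L,R]\le U$, so that $LR/U=L/U\circ R/U$ is a central product with $R/U$ nilpotent; it then builds $\phi\in\Irr(LR/U)$ over the central involution of $2$-power degree $2^{a+\lfloor(n-2)/2\rfloor}$ (basic spin character times a character of the Sylow $2$-subgroup of $R/U$, via \cite[Lemma~5.1]{IMN}), notes $|G:LR|\le 2$ so $\phi(1)$ or $2\phi(1)$ lies in $\cd(\Alt_n)$, and only then invokes \cite[Theorem~5.1]{BBOO}: the unique nontrivial prime power degree of $\Alt_n$ is $n-1$, forcing $n-1\ge 2^{(n-4)/2}$, false for $n\ge 14$. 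Without some argument of this kind (or an actual proof of your claimed property of $2\cdot\Alt_n$), your proof of $N=1$ does not go through.

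A secondary, fixable flaw: to see that the socle of $G/M$ is a single copy of $\Alt_n$ you argue that two copies would give a degree of $G$ \emph{divisible} by $(n-1)^2$, "contrary to Lemmas~\ref{lem:mindeg} and~\ref{lem:excl2}"; those lemmas only exclude $(n-1)^2$ (and related numbers) as degrees, not as divisors of degrees, and divisibility by $(n-1)^2$ is not obviously impossible. The paper's route is cleaner and you should adopt it: a second simple factor (or a nonabelian centralizer $C/R$) would produce a character of a normal subgroup of degree $b(\Alt_n)\cdot e$ with $e>1$, which cannot divide any element of $\cd(\Alt_n)$ since all such are at most $b(\Alt_n)$. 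Your remaining steps (nonsolvability, identification of the chief factor, the self-centralizing case via Theorem~\ref{th:largedegree}, and the final splitting with $A$ the solvable radical) match the paper's argument.
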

\begin{proof}
Let $R$ be the solvable radical of $G$ and let $L$ be the last term of the derived series of~$G$. By Theorem~\ref{thm:nonsolvable}, $G$ is nonsolvable and thus $L$ is a nontrivial normal perfect subgroup of~$G$.
Let $D/R$ be a chief factor of~$G$. Clearly, $D/R$ is nonabelian and thus $D/R\cong\Alt_n$ by Theorem~\ref{th:composition}. Now let $C$ be a normal subgroup of $G$ such that $C/R=\Centralizer_{G/R}(D/R)$.
Then $G/C$ is almost simple with simple socle $DC/C\cong \Alt_n$.
Since $n\geq 14$, $\Aut(\Alt_n)\cong\Sym_n$ and thus  $G/C\cong \Alt_n$ or  $\Sym_n$.
We see that $$DC/R=D/R\times C/R\cong \Alt_n\times C/R.$$

We claim that  $C/R$ is abelian. If this is not the case,
let $\chi \in \Irr(D/R) $ with $\chi(1)=b(\Alt_n)$ and
$\lambda \in \Irr(C/R)$ with $\lambda(1) > 1$ then $\chi \lambda \in \Irr(DC/R)$
with degree $\chi(1)\lambda(1)=b(\Alt_n)\lambda(1)>b(\Alt_n)$. Since $DC\unlhd G$,  $\chi(1)\lambda(1)$ divides some degree of~$G$,  which is impossible.
Thus $C/R$ is abelian as claimed and so $C=R$.
Since $LR/R$ is a perfect normal subgroup of the almost simple group $G/R$ with simple socle $D/R\cong\Alt_n$,  we deduce that $LR/R=D/R\cong \Alt_n$,  and $G/R\cong\Alt_n$ or $\Sym_n$,  hence $|G:LR|\le 2$.

Let $V:=R\cap L$. Then $V\unlhd G$ and $LR/R\cong L/V\cong\Alt_n$,  so $LR/V\cong L/V\times R/V\cong \Alt_n\times R/V$. Since $LR\unlhd G$,  argue as above, we deduce that $R/V$ is abelian.
If $V$ is trivial, then $\Alt_n\times R=L\times R\cong LR\unlhd G$,  where $R$ is abelian.
Now if $G/R\cong \Alt_n$,  then $G=L\times R$ and conclusion $(i)$ holds. If $G/R\cong\Sym_n$,  then $|G:LR|=2$ so $G=(L\times R)\cdot 2$,  hence conclusion $(ii)$ holds.
So, assume that $V$ is nontrivial. Let $V/U$ be a chief factor of $L$    and let $\overline{L}=L/U$. Since $V\leq R$,  $V$ is solvable and thus $\overline{V}$ is a normal elementary abelian subgroup of the perfect group $\overline{L}$.

\smallskip
{\bf(a)}  $V/U=\Center(L/U)\cong\Z_2$ and $L/U\cong2\cdot \Alt_n$. Let $\overline{W}=\Centralizer_{\overline{L}}(\overline{V})$. Then $\overline{V}\unlhd \overline{W}\unlhd \overline{L}$. As $\overline{L}/\overline{V}\cong L/V\cong \Alt_n$,  either $\overline{W}=\overline{L}$ or $\overline{W}=\overline{V}$. Assume that the latter case holds.
As $\cd(\overline{L})\subseteq \cd(L)$ and $L\unlhd G$,  every degree of~$\overline{L}$ divides some degree of~$G$,   contradicting Theorem \ref{th:largedegree}.
Thus $\overline{W}=\overline{L}$ so $\overline{V}\leq \Center(\overline{L})\cap (\overline{L})'$. Hence $|\overline{V}|$ divides the order of the Schur multiplier of $\overline{L}/\overline{V}\cong \Alt_n$ (see the proof of \cite[Lemma~6]{Hupp}). Since $n\geq 14$,  the Schur multiplier of $\Alt_n$ is cyclic of order $2$ and the universal covering group of $\Alt_n$ is the double cover $2\cdot \Alt_n$. As $|\overline{V}|>1$,  we have $\overline{V}=\Center(\overline{L})\cong\Z_2$ and  $\overline{L}\cong 2\cdot \Alt_n$ as wanted.

\smallskip
{\bf(b)} $U\unlhd G$. Suppose that $U$ is not normal in $G$. Clearly, the core of $U$ in $G$ defined by $U_G:=\cap_{g\in G}U^g$ is  the largest normal subgroup of $G$ contained in $U$. Let $K\unlhd G$ be such that $K\lneq U\le V$ and $V/K$ is a chief factor of $G$. (Noting that $K$ could be trivial). For each $g\in G$,  we have $K=K^g\leq U^g$ and so $K\le U_G\le V\unlhd G$. Since $V/K$ is a chief factor of $G$,  we deduce that $K=U_G$. From ${(a)}$,  we know that $V/U=\Center(L/U)$,  so $[L,V]\le U$. Since both $L$ and $V$ are normal in $G$,  $[L,V]\unlhd G$ and thus $[L,V]\le U_G=K$. Now $L/K$ is a perfect group with a central subgroup $V/K$ such that $(L/K)/(V/K)\cong L/V\cong\Alt_n$. It follows that $L/K\cong 2\cdot \Alt_n$ and $V/K\cong \Z_2$. Therefore $K\lneq U\lneq V$ with $|V/U|=2$ and $|V/K|=2$,  which is impossible. Thus $U\unlhd G$ as wanted.

\smallskip
{\bf(c)} The final contradiction.
By $(a)$ and ${(b)}$,  $U\unlhd G$,  $V/U=\Center(L/U)\cong\Z_2$ and $L/U\cong 2\cdot \Alt_n$. Since $V/U\unlhd G/U$ and $|V/U|=2$,  $V/U\le \Center(G/U)$,  so $[G,V]\le U$. Recall that $R/V$ is abelian.
We have $[L,R]=[R,L]\leq L\cap R=V$,  hence \[[L,R,L]\le [V,L]\le U\text{ and }[R,L,L]\le [V,L]\le U.\] By Three Subgroups Lemma, we have $[L,L,R]=[L,R]\le U$.
It follows that $LR/U=L/U\circ R/U$ is a central product with $L/U\cap R/U=V/U\cong \Z_2$. Let $\alpha\in\Irr(V/U)$ be a nontrivial irreducible character.
Since $(R/U)'=R'U/U\subseteq V/U\subseteq \Center(R/U)$,  $R/U$ is nilpotent. Then $R/U=P/U\times Q/U$,  where $P/U$ is a Sylow $2$-subgroup and $Q/U$ is a normal $2$-complement in $R/U$. Obviously $V/U\unlhd P/U$ and $V/U$ is centralized by $Q/U$. We can find $\lambda_0\in\Irr(P/U|\alpha)$ with $\lambda_0(1)=2^a$ for some $a\geq 0$. Clearly, $\lambda=\lambda_0\times 1_{Q/U}\in\Irr(R/U|\alpha)$ with $\lambda(1)=2^a$. As $L/U\cong 2\cdot \Alt_n$,  by \cite[Theorem~4.3]{Bessenrodt}, we can find $\nu\in\Irr(L/U|\alpha)$ with $\nu(1)=2^{\lfloor(n-2)/2\rfloor}$. By \cite[Lemma~5.1]{IMN}, $\phi:=\nu\cdot \lambda\in \Irr(LR/U)$ of degree  $\nu(1)\lambda(1)=2^{a+\lfloor(n-2)/2\rfloor}$. Since $|G:LR|\le 2$,  if $\chi\in\Irr(G|\phi)$,  then $\chi$ is an extension of $\phi$ or $\chi=\phi^G$. Hence either $\phi(1)$ or $2\phi(1)$ is a degree of~$G$.  Now \cite[Theorem~5.1]{BBOO} yields
\begin{equation}\label{eqn5} n-1=2^{\epsilon+a+\lfloor(n-2)/2\rfloor},
\end{equation} where $\epsilon=0$ or $1$.
Since
$$\epsilon+a+\bigg\lfloor\frac{n-2}{2}\bigg\rfloor\geq \frac{n-2}{2}-1=\frac{n-4}{2},$$
we deduce that
$n-1\geq 2^{(n-4)/2}$. As $n\geq 14$,  by using induction on $n$ the latter inequality cannot occur, so \eqref{eqn5} cannot happen.
The proof is now complete.
\end{proof}

Finally, we can give the \textbf{proof of Theorem \ref{th:main}}.
Let $G$ be a finite group such that $\cd(G) = \cd(\Alt_n)$, $n\geq 5$.
We may assume that $n\geq 14$, as the result was already proved up to $n=13$.
If we are in case (i) of Theorem \ref{thm:almostHC}, then Huppert's Conjecture holds and we are done. So assume case (ii) of the theorem occurs. It follows that $\cd(\Sym_n)=\cd(G/A)\subseteq\cd(\Alt_n)$.
Now using the main result in \cite{Debaene} claiming that $\cd(\Sym_n)\not\subseteq\cd(\Alt_n)$ we obtain a contradiction.
Hence Theorem \ref{th:main} now follows.


\end{document}